\documentclass[12pt,a4paper,twoside,reqno]{amsart}
\allowdisplaybreaks

\usepackage[all,cmtip]{xy} 
\usepackage[T1]{fontenc}
\usepackage{amsmath,amscd}
\usepackage{amssymb,amsfonts}
\usepackage[driver=pdftex,margin=3cm,heightrounded=true,centering]{geometry}
\usepackage{mathtools}
\usepackage{tensor}
\usepackage{url}
\usepackage[colorlinks=true,linkcolor=blue,citecolor=blue]{hyperref}
\usepackage{slashed}
\usepackage{comment}

\usepackage{graphicx}

\usepackage{amsthm}

\theoremstyle{plain}

\newtheorem{theorem}{Theorem}[section]
\newtheorem*{thm*}{Theorem}
\newtheorem{proposition}[theorem]{Proposition}
\newtheorem{lemma}[theorem]{Lemma}
\newtheorem{corollary}[theorem]{Corollary}

\theoremstyle{definition}
\newtheorem{definition}[theorem]{Definition}

\theoremstyle{remark} 

\numberwithin{equation}{section}

\newcommand{\alpheqn}[1][\relax]{
     \refstepcounter{equation}
     \if#1\relax \relax
       \else \label{#1}
     \fi  
     \setcounter{saveeqn}{\value{equation}}%
    \setcounter{equation}{0}%
    \renewcommand{\theequation}{\thealphequation}}
\newcommand{\reseteqn}{\setcounter{equation}{\value{saveeqn}}%
     \renewcommand{\theequation}{\thearabicequation}}

\providecommand{\mathscr}{\mathcal} 
\IfFileExists{mathrsfs.sty}{
\usepackage{mathrsfs}}         
   {
     \IfFileExists{eucal.sty}{
        \usepackage[mathscr]{eucal}} 
   {
   }
}

\newcommand{\vertiii}[1]{{\left\vert\kern-0.25ex\left\vert\kern-0.25ex\left\vert #1 
    \right\vert\kern-0.25ex\right\vert\kern-0.25ex\right\vert}}
\newcommand{\Bvert}[1]{{\Big\vert\kern-0.25ex\Big\vert\kern-0.25ex\Big\vert #1 
    \Big\vert\kern-0.25ex\Big\vert\kern-0.25ex\Big\vert}}
\newcommand{\bvert}[1]{{\big\vert\kern-0.25ex\big\vert\kern-0.25ex\big\vert #1 
    \big\vert\kern-0.25ex\big\vert\kern-0.25ex\big\vert}}
\newcommand{\nvert}[1]{{\vert\kern-0.25ex\vert\kern-0.25ex\vert #1 
    \vert\kern-0.25ex\vert\kern-0.25ex\vert}}

\renewcommand{\leq}{\leqslant}
\renewcommand{\geq}{\geqslant}

\newcommand{\cd}{\cdot}

\newcommand{\op}{\oplus}

\newcommand{\ti}{\times}

\newcommand{\al}{\alpha}

\newcommand{\ga}{\gamma}

\newcommand{\de}{\delta}

\newcommand{\ep}{\varepsilon}

\newcommand{\io}{\iota}

\newcommand{\si}{\sigma}

\newcommand{\te}{\theta}

\newcommand{\ov}{\overline}
\newcommand{\C}[1]{\mathcal{#1}}

\newcommand{\T}[1]{\textup{#1}}

\newcommand{\B}[1]{\mathbb{#1}}

\newcommand{\fork}[2]{\left\{ \begin{array}{#1} #2 \end{array} \right.}

\newcommand{\su}{\subseteq}

\newcommand{\q}{\qquad}

\newcommand{\inn}[1]{\langle #1 \rangle}

\newcommand{\sem}{\setminus}

\usepackage{amssymb}
\usepackage{amsthm}
\usepackage{thmtools}
\usepackage{mathtools}
\usepackage{derivative}

\newcommand{\N}{\mathbb{N}}
\newcommand{\Z}{\mathbb{Z}}
\renewcommand{\O}{\mathcal{O}}
\renewcommand{\epsilon}{\varepsilon}
\renewcommand{\phi}{\varphi}
\newcommand{\X}{\mathcal{X}}
\newcommand{\compdist}{\mathrm{dist}^s}
\newcommand{\Y}{\mathcal{Y}}

\newcommand{\Hil}{\mathcal{H}}
\newcommand{\dif}{\mathrm{d}}

\newcommand{\diam}{\mathrm{diam}}

\DeclareMathOperator{\Lip}{Lip}

\DeclarePairedDelimiter\abs{\lvert}{\rvert}
\DeclarePairedDelimiter\norm{\lVert}{\rVert}

\usepackage{todonotes}
\usepackage{cleveref}
\usepackage[shortlabels]{enumitem}
\usepackage{subfiles}

\makeatletter
\@namedef{subjclassname@2020}{%
  \textup{2020} Mathematics Subject Classification}
\makeatother

\begin{document}
\title[Quantum deformations of the arc length metric]{Quantum deformations of the arc length metric}


\author{Rasmus Hauge Hansen and Jens Kaad}

\address{Department of Mathematics and Computer Science,
The University of Southern Denmark,
Campusvej 55, DK-5230 Odense M,
Denmark}



\keywords{Quantum metric spaces, differential calculus, $q$-derivative, $q$-integral} 
\subjclass[2020]{46L87; 46L30} 
%
%
%

\begin{abstract}
  We investigate a $q$-deformation of the arc length metric on the unit circle. This $q$-deformation arises naturally from the Dirac operator by replacing the standard integers by their $q$-deformed analogues. Nonetheless, we show that the corresponding metric structure only makes sense at the level of quantum metric spaces as introduced by Marc Rieffel. This means that the quantum metric we obtain on the continuous functions on the circle does not arise from a classical metric on the circle. In the special case where $q$ equals one we recover the usual arc length metric and we show that our family of quantum metric spaces depend continuously on the deformation parameter with respect to David Kerr's complete Gromov-Hausdorff distance.
%
\end{abstract}

\maketitle
\tableofcontents

\section{Introduction}
The theory of compact quantum metric spaces was pioneered by Rieffel in a series of papers, \cite{Rie:MSA,Rie:MSS,Rie:GHD}. The inspiration and motivation come from the rich theory of classical compact metric spaces in combination with the large program on noncommutative geometry as initiated by Connes, \cite{Con:CMS,Con:NCG,Con:GCM}. Most prominently, Rieffel's theory admits an interesting analogue of the classical Gromov-Hausdorff distance which is referred to as the quantum Gromov-Hausdorff distance and permits us to treat convergence questions for compact quantum metric spaces in a systematic way, \cite{Rie:GHD}. By now, the theory of compact quantum metric spaces has been developed considerably due to contributions of several authors, see the incomplete list \cite{Ker:MQG,Li:TDC,OzRi:HGC,ChIv:STA,Lat:QGH,CoSu:STN,AgKaKy:PSC,LeSu:GHC} for some of the highlights. 

We are in this text concerned with Kerr's complete Gromov-Hausdorff distance $\compdist$ which is an enhanced version of Rieffel's quantum Gromov-Hausdorff distance taylored to deal with operator systems, see \cite{Ker:MQG}. For other refinements of the quantum Gromov-Hausdorff distance we refer to the extensive work of Latr\'emoli\`ere, see e.g. \cite{Lat:QGH,Lat:GHP}.

To explain the contributions of the present paper we recall the definition of a compact quantum metric space. In the operator system approach, a compact quantum metric space is given by an operator system $\C X$ and a seminorm $L : \C X \to [0,\infty)$ which is supposed to be $*$-invariant and vanish on the unit. The key condition is then that the associated \emph{Monge-Kantorovich metric} on the state space $S(\C X)$ metrizes the weak-$\ast$\ topology. If $\C X$ sits densely inside a unital $C^*$-algebra $A$, then we say that $(\C X,L)$ is a quantum metric on $A$.

  Let us now fix a compact Hausdorff space $M$ together with the unital $C^*$-algebra $C(M)$ of continuous complex-valued functions on $M$ equipped with the supremum norm $\| \cd \|_\infty$. An intriguing observation in the commutative setting is that there are examples of quantum metrics $(\C X,L)$ on $C(M)$ which do not come from a metric on $M$. This means that it is impossible to find a metric on $M$ such that $L(f)$ agrees with the Lipschitz constant for all $f \in \C X$. For a thorough discussion of these matters we refer to Rieffel's paper \cite{Rie:MSS} and in particular \cite[Theorem 4.2 and Theorem 8.1]{Rie:MSS}. Remark however that the results in \cite{Rie:MSS} are confined to order unit spaces and hence, in the commutative setting, they are concerned with the continuous real-valued functions $C(M,\B R)$ as opposed to the continuous complex-valued functions $C(M)$.  


In this paper, we investigate the simple setting where the compact Hausdorff space $M$ agrees with the unit circle $S^1 \su \B C$. Let us apply the notation $\C O(S^1) \su C(S^1)$ for the smallest unital $*$-subalgebra containing the inclusion $z : S^1 \to \B C$. We present a whole family $\big( (\Lip_q(S^1), L_q)\big)_{q \in (0,1]}$ of quantum metrics on $C(S^1)$ such that $\C O(S^1) \su \Lip_q(S^1)$ for all $q \in (0,1]$. This family moreover has the following properties: 
  \begin{itemize}
  \item Let $q = 1$. The operator system $\Lip_1(S^1)$ agrees with the Lipschitz functions on $S^1$ equipped with the usual arc length metric and for $f \in \Lip_1(S^1)$ it holds that $L_1(f)$ is the Lipschitz constant. The compact quantum metric space $(\Lip_1(S^1), L_1)$ therefore comes from the arc length metric on $S^1$.
  \item Let $q \in (0,1)$ and let $\C X \su \Lip_q(S^1)$ be an operator subsystem with $\C O(S^1) \su \C X$. The quantum metric $(\C X, L_q)$ on $C(S^1)$ does not come from a metric on $S^1$. In fact, it is impossible to find a constant $C > 0$ such that
    \[
L_q(f \cd g) \leq C \cd \big( L_q(f) \cd \| g \|_\infty + \| f \|_\infty \cd L_q(g) \big) \q \T{for all } f,g \in \C O(S^1) .
    \]
In particular, $\big( \C O(S^1), L_q\big)$ does not satisfy the Leibniz inequality. 
  \item Our family of compact quantum metric spaces is continuous with respect to Kerr's complete Gromov-Hausdorff distance. Thus, for every $q_0 \in (0,1]$, we have that 
    \[
\lim_{q \to q_0} \compdist\big( (\Lip_q(S^1), L_q), ( \Lip_{q_0}(S^1), L_{q_0} ) \big) = 0 .
    \]
  \item Let $q \in (0,1]$. The complete Gromov-Hausdorff distance between the minimal and the maximal quantum metrics on $C(S^1)$ is equal to zero. This means that $\compdist\big( (\C O(S^1),L_q), (\Lip_q(S^1),L_q) \big) = 0$.
  \end{itemize}

  The above properties are achieved by applying the theory of Schur multipliers, see e.g. \cite{Pis:SPC}. More precisely, many of our constructions and estimates are related to results from \cite{JKDK_QSU2} where similar structures control part of a family of quantum metrics on quantum $SU(2)$.  
  
Let us end this introduction by a brief explanation of the basic question which motivates our work. Consider the Dirac operator $-i \frac{d}{d\theta} : C^\infty(S^1) \to C^\infty(S^1)$ for the unit circle and observe that
\begin{equation}\label{eq:dirac}
-i \frac{d}{d\theta}(z^n) = n \cd z^n \q \T{for all } n \in \B Z .
\end{equation}
The reconstruction theorem in noncommutative geometry entails that one may, in an appropriate sense, recover the unit circle from this Dirac operator, see \cite{Con:SCM}. For $q \in (0,1)$ one could therefore ask what happens to the geometry of the unit circle, if we replace the integer $n$ on the right hand side of \eqref{eq:dirac} by the corresponding $q$-integer $[n]_q = \frac{q^n - q^{-n}}{q - q^{-1}}$. In this paper, we investigate how this replacement affects the (quantum) metric properties of the compact Hausdorff space $S^1$.

\section{Notation and conventions}\label{s:conventions}
For a Hilbert space \(\Hil\), the notation \(\B B(\Hil)\) refers to the unital \(C^*\)-algebra of bounded operators on \(\Hil\) equipped with the operator norm $\| \cd \|_\infty : \B B(\Hil) \to [0,\infty)$. We use the convention that the inner product on the Hilbert space \(\Hil\) is linear in the second leg.
  
Apply the notation $\ell^2(\B Z)$ for the separable Hilbert space of $\ell^2$-sequences indexed by the integers. The standard orthonormal basis for $\ell^2(\B Z)$ is denoted by \((e_j)_{j\in\Z}\). The bilateral shift is the unitary operator $U : \ell^2(\B Z) \to \ell^2(\B Z)$ determined by the rule $U(e_j) = e_{j+1}$. As is customary we identify the unital $C^*$-algebra of continuous functions on the unit circle, $C(S^1)$, with the unital $C^*$-subalgebra $C^*(U) \su \B B(\ell^2(\B Z))$ generated by $U$. This identification happens via the map sending the inclusion $z : S^1 \to \B C$ to the bilateral shift $U$. 

For a finite set $F$, the notation $|F| \in \B N_0$ refers to the number of elements in $F$.

In this text, we work with concrete operator systems, meaning that for us, an operator system $\C X$ sits as a (not necessarily closed) subspace of a fixed unital $C^*$-algebra $A$. We do require that the unit $1$ from $A$ belongs to $\C X$ and of course that the involution from $A$ preserves $\C X$. The operator system $\C X$ inherits both a norm and a notion of positive elements from $A$. We identify the complex numbers $\B C$ with the subspace $\B C \cd 1 \su \C X$. 


\section{A $q$-deformed differential calculus on the unit circle}\label{s:diffcalc}
Throughout this section we fix a $q$ in $(0,1]$. For every integer $n \in \B Z$, the corresponding $q$-integer is defined by
\[
[n]_q := \fork{ccc}{ \frac{q^n - q^{-n}}{q - q^{-1}} & \T{for} & q \in (0,1) \\ n & \T{for} & q = 1} . 
\]

Let $\C O(S^1)$ denote the smallest unital $\ast$-subalgebra of the unital \(C^*\)-algebra \(C(S^1)\) containing the inclusion $z : S^1 \to \B C$. As a vector space $\C O(S^1)$ has a basis given by the integer powers of $z$, thus $( z^n)_{n\in \B Z}$.

The aim of this section is to construct a $q$-deformed first order differential calculus (FODC) over $\C O(S^1)$. In the case where $q = 1$, we recover an algebraic version of the usual de Rham complex for the unit circle (tensorized with the complex numbers). By a first order differential calculus, we mean the following, see e.g. \cite[12.1.1 Definition 1 and 2]{KLSC} or \cite[Definition 1.1 and 1.4]{BeMa:QRG}.

\begin{definition}
  A first order differential calculus over a unital algebra \(\C A\) consists of an \(\C A\)-bimodule \(\C M\) with a linear map \( \dif : \C A\to \C M\) such that
  \begin{enumerate}[i)]
    \item (Leibniz rule) \(\dif(ab)=a \triangleright \dif(b)+\dif(a)\triangleleft b\) for all \(a,b\in \C A\),
    \item \(\C M=\T{span}_{\B C}\big\{a\triangleright \dif(b)\triangleleft c \mid a,b,c\in \C A \big\}\).
  \end{enumerate}

In the case where \(\C A\) is a unital $\ast$-algebra, a FODC $(\dif,\C M)$ over \(\C A\) is a \emph{$\ast$-calculus}, if there exists an involution of the vector space \(\C M\) such that \((a\triangleright \dif(b)\triangleleft c)^*=c^*\triangleright \dif(b^*)\triangleleft a^*\) for all \(a,b,c\in \C A\).
\end{definition}

The unital $\ast$-algebra $\C O(S^1)$ comes equipped with an algebra automorphism  \(\sigma:\O(S^1)\to \O(S^1)\) determined by
\[
  \sigma(z^n)=q^nz^n \q \T{for all } n\in\Z.
  \]
  The inverse algebra automorphism \(\sigma^{-1}\) is given by \(z^n\mapsto q^{-n}z^n\). We record that $\si$ is not in general $\ast$-preserving. Instead it holds that
  \begin{equation}\label{eq:modular}
\sigma(f^*) = \si^{-1}(f)^* \q f \in \C O(S^1) .
\end{equation}

Put $\C X_q := \C O(S^1)$ and consider $\C X_q$ as a bimodule over $\C O(S^1)$ with left and right actions given by 
\[
  f\triangleright \xi = \sigma(f)\cdot \xi \q \xi\triangleleft f = \xi\cdot \sigma^{-1}(f),
\] 
where \(\cdot\) denotes the usual product in \(\O(S^1)\). The bimodule $\C X_q$ also inherits an involution $\ast$ from $\C O(S^1)$. 

\begin{definition}
The \emph{$q$-derivative} is the linear map \( \dif_q:\O(S^1)\to \C X_q\) defined by \(z^n\mapsto i[n]_qz^n\). 
\end{definition}

%

\begin{proposition}
The pair \((\X_q,\dif_q)\) is a $\ast$-calculus over \(\O(S^1)\).
\end{proposition}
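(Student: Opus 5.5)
We check the three requirements directly on the basis $(z^n)_{n \in \B Z}$. All maps involved are linear or conjugate-linear, so it suffices to verify the identities on monomials.

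\emph{Bimodule structure.} Since $\si$ and $\si^{-1}$ are algebra automorphisms and $\C O(S^1)$ is commutative, the left and right actions are module actions. They commute because $(\si(f)\xi)\si^{-1}(g) = \si(f)(\xi \si^{-1}(g))$.

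\emph{Leibniz rule.} For $f = z^n$ and $g = z^m$ we need
\[
\dif_q(z^{n+m}) = \si(z^n)\, \dif_q(z^m) + \dif_q(z^n)\, \si^{-1}(z^m),
\]
that is, $i[n+m]_q z^{n+m} = i\big(q^n [m]_q + q^{-m}[n]_q\big) z^{n+m}$. This reduces to the $q$-integer identity
\[
[n+m]_q = q^n [m]_q + q^{-m}[n]_q .
\]
For $q \in (0,1)$, multiply by $q - q^{-1}$. The right-hand side becomes $q^{n+m} - q^{n-m} + q^{n-m} - q^{-n-m}$, which equals the left-hand side. For $q = 1$ the identity is simply $n+m = m+n$.

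\emph{Generation.} Here we must show that $\C X_q$ is spanned by elements of the form $a \triangleright \dif_q(b) \triangleleft c$. Take $b = z$: then $\dif_q(z) = i z$, since $[1]_q = 1$. Hence
\[
z^{n-1} \triangleright \dif_q(z) \triangleleft 1 = i q^{n-1} z^n .
\]
Because $q > 0$, this is a nonzero multiple of $z^n$, so every basis element of $\C X_q$ is reached.

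\emph{The $\ast$-structure.} We use the involution on $\C X_q$ inherited from $\C O(S^1)$. The main point is compatibility, and the key facts are \eqref{eq:modular} together with $\dif_q(b^*) = \dif_q(b)^*$. The latter holds because
\[
\dif_q(z^{-n}) = i[-n]_q z^{-n} = -i[n]_q z^{-n} = (i[n]_q z^n)^* ,
\]
using that $[-n]_q = -[n]_q$ and that $[n]_q$ is real. Then, using commutativity,
\[
(a \triangleright \dif_q(b) \triangleleft c)^* = \si^{-1}(c)^*\, \dif_q(b)^*\, \si(a)^* .
\]
By \eqref{eq:modular}, $\si^{-1}(c)^* = \si(c^*)$ and $\si(a)^* = \si^{-1}(a^*)$. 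The right-hand side is therefore $\si(c^*)\, \dif_q(b^*)\, \si^{-1}(a^*) = c^* \triangleright \dif_q(b^*) \triangleleft a^*$, as required.

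The only genuinely computational step is the $q$-integer identity in the Leibniz rule. Everything else is bookkeeping, and the sole subtlety is using \eqref{eq:modular} in the right direction.
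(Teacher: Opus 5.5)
Your proof is correct and matches the paper's argument: Leibniz via the identity $[n+m]_q = q^n[m]_q + q^{-m}[n]_q$ on monomials, and the $\ast$-compatibility via \eqref{eq:modular} together with $\dif_q(b^*) = \dif_q(b)^*$. The differences are cosmetic. You check the bimodule axioms explicitly, and you obtain generation uniformly from $z^{n-1}\triangleright\dif_q(z)\triangleleft 1 = iq^{n-1}z^n$. The paper instead uses $1\triangleright\dif_q\big(z^n/(i[n]_q)\big)\triangleleft 1$ for $n\neq 0$ and reserves that trick for $n=0$.
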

\begin{proof}
  To verify the Leibniz rule we focus on the case where $a = z^n$ and $b = z^m$ for some $n,m \in \B Z$. In this situation, we use the identity $[n + m]_q = q^n [m]_q + [n]_q q^{-m}$ to obtain that 
  \[
\dif_q(z^{n+m}) = i q^n [m]_q z^{n+m} + i [n]_q q^{-m} z^{n+m} = \si(z^n) \cd \dif_q(z^m) + \dif_q(z^n) \cd \si^{-1}(z^m) .
\]

We now turn our attention to verifying condition $ii)$. It suffices to show that
\begin{equation}\label{eq:generatorspan}
  z^n\in\T{span}_{\B C}\big\{ f\triangleright \dif_q(g)\triangleleft h \mid f,g,h\in\O(S^1) \big\} \q \T{for all } n\in\Z,
\end{equation}
since the set af all such functions spans \(\X_q\). For $n \neq 0$ the relevant inclusion follows since
$z^n = 1 \triangleright \dif_q\big(z^n \frac{1}{i [n]_q} \big) \triangleleft  1$. For $n = 0$ the inclusion follows by observing that
$1 = -i q z^{-1}\triangleright\dif_q(z) \triangleleft 1$.

It remains to consider the interplay between the various involutions. To this end, we first notice that $\dif_q(x^*) = \dif_q(x)^*$ for all $x \in \O(S^1)$. The relevant identity now follows from \eqref{eq:modular}. 
\end{proof}

In the special case where $q = 1$, the $1$-derivative agrees with the vector field $\frac{d}{d\te} : \C O(S^1) \to \C O(S^1)$ given locally by differentiation with respect to the chart $S^1 \sem \{1\} \cong (0,2\pi)$ (mapping $e^{i \te}$ to $\te$). Indeed, we have that $\frac{d z^n}{d\te} = i n z^n = \dif_1(z^n)$ for all $n \in \B Z$ and our FODC therefore recovers the classical de Rham complex upon tensorizing with the complex numbers and replacing the smooth functions and $1$-forms with their algebraic counterparts.






\section{Extending the $q$-derivative}\label{sec:extending}
Recall that $q$ is a fixed number in $(0,1]$. The aim of this section is to extend the $q$-derivative $\dif_q : \O(S^1) \to \X_q$ enabling us to take the $q$-derivative of a much larger variety of functions. The most effective way of achieving this is to work at the level of bounded operators on $\ell^2(\B Z)$. In fact, our extended $q$-derivative is an example of an unbounded Schur multiplier. The related theory of bounded Schur multipliers is well-developed and the reader can for instance consult the chapters \cite[Chapter 5]{Pis:SPC}, \cite[Chapter 8]{paulsen} or the survey \cite{ToTu:SOM}. 

For a bounded operator $T : \ell^2(\B Z) \to \ell^2(\B Z)$, apply the notation $T_{jk} := \inn{e_j, T e_k}$ for the matrix units indexed by $j,k \in \B Z$. For every finite non-empty subset $F \su \B Z$ we then obtain the finite matrix $(T_{jk})_{j,k \in F}$ which can be viewed as an element in $M_{|F|}(\B C)$ and hence has a $C^*$-norm $\big\| (T_{jk})_{j,k \in F} \big\|_\infty$. Since $T$ is bounded it holds that the supremum
\[
\sup\Big\{ \big\| (T_{jk})_{j,k \in F} \big\|_\infty \mid F \su \B Z \, , \, \, \T{finite and non-empty}\Big\}
\]
is finite and it agrees with the operator norm of $T$. In fact, we may identify the bounded operators $\B B\big( \ell^2(\B Z) \big)$ with the infinite matrices $(M_{jk})_{j,k \in \B Z}$ (with entries in $\B C$) satisfying that the subset below is bounded from above: 
\[
\Big\{ \big\| (M_{jk})_{j,k \in F} \big\|_\infty \mid F \su \B Z \, , \, \, \T{finite and non-empty} \Big\} \su [0,\infty) .
\]

The notation $\ell^\infty(\B Z)$ refers to the unital $C^*$-algebra of bounded $\B C$-valued functions on $\B Z$ and we identify $\ell^\infty(\B Z)$ with the unital $C^*$-subalgebra of $\B B\big(\ell^2(\B Z)\big)$ consisting of diagonal infinite matrices. This identification maps a function $f \in \ell^\infty(\B Z)$ to the infinite matrix $( f(j) \de_{jk} )_{j,k \in \B Z}$, where $\de_{jk} \in \{0,1\}$ is the Kronecker delta. 

\begin{definition}\label{d:unbschur}
  Let $\mu : \B Z \ti \B Z \to \B C$ be a function. The \emph{unbounded Schur multiplier} associated to $\mu$ is the linear map
  $\B M(\mu) : \T{Dom}\big( \B M(\mu) \big) \to \B B\big(\ell^2(\B Z)\big)$ given by the formula
\[
\B M(\mu)\big( (M_{jk})_{j,k \in \B Z} \big) = \big( \mu(j,k) \cd M_{jk} \big)_{j,k \in \B Z}
\]
on the domain $\T{Dom}\big( \B M(\mu) \big)$ consisting of those bounded operators $(M_{jk})_{j,k \in \B Z}$ where $\big( \mu(j,k) \cd M_{jk} \big)_{j,k \in \B Z}$ is likewise a bounded operator.
\end{definition}

Remark that for a function $\mu : \B Z \ti \B Z \to \B C$, the corresponding unbounded Schur multiplier
$\B M(\mu) : \T{Dom}\big( \B M(\mu) \big) \to \B B\big(\ell^2(\B Z)\big)$ is automatically a closed unbounded operator. It therefore follows from the closed graph theorem that, if the domain of $\B M(\mu)$ agrees with $\B B\big(\ell^2(\B Z) \big)$, then the corresponding linear map
\[
\B M(\mu) : \B B\big( \ell^2(\B Z) \big) \to \B B\big( \ell^2(\B Z) \big)
\]
is automatically bounded. In this case, we refer to $\B M(\mu)$ as a \emph{bounded Schur multiplier}. In fact, by \cite[Theorem 5.1]{Pis:SPC} the bounded operator $\B M(\mu)$ is automatically completely bounded and the cb-norm of $\B M(\mu)$ agrees with the operator norm of $\B M(\mu)$. 

Let us return to the general setting where $\B M(\mu)$ is an unbounded Schur multiplier. We may consider the bounded operators $\B B\big( \ell^2(\B Z) \big)$ as a bimodule over $\ell^\infty(\B Z)$ with left and right action given by multiplication of bounded operators. In this fashion, it holds that the domain $\T{Dom}\big( \B M(\mu) \big) \su \B B\big(\ell^2(\B Z)\big)$ is a sub-bimodule over $\ell^\infty(\B Z)$ and the unbounded Schur multiplier $\B M(\mu)$ is bilinear over $\ell^\infty(\B Z)$. 

The proof of the next lemma is an elementary exercise in unbounded Schur multipliers. Notice in this respect that for a bounded operator $T : \ell^2(\B Z) \to \ell^2(\B Z)$ we get that $(T^*)_{jk} = \ov{T_{kj}}$ for all $j,k \in \B Z$.

\begin{lemma}\label{l:opsys}
  If the function $\mu : \B Z \ti \B Z \to \B C$ satisfies that $\ov{ \mu(k,j) } = \mu(j,k)$ for all $j,k \in \B Z$ and the supremum
  $\sup_{j \in \B Z} \big| \mu(j,j) \big|$ is finite, then the domain $\T{Dom}\big( \B M(\mu) \big) \su \B B\big(\ell^2(\B Z)\big)$ is an operator system. It moreover holds that $\B M(\mu)(T^*) = \big( \B M(\mu)(T) \big)^*$ for all $T \in \T{Dom}\big( \B M(\mu) \big)$.
\end{lemma}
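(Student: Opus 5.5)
We need three things: $\T{Dom}\big(\B M(\mu)\big)$ is a linear subspace of $\B B\big(\ell^2(\B Z)\big)$, it contains the unit, and it is closed under the involution. Together with the formula $\B M(\mu)(T^*) = \big(\B M(\mu)(T)\big)^*$, these give the whole statement.

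Linearity is immediate from Definition \ref{d:unbschur}. If $S,T$ lie in the domain and $\la \in \B C$, then
\[
\big(\mu(j,k)(S + \la T)_{jk}\big)_{j,k} = \big(\mu(j,k) S_{jk}\big)_{j,k} + \la \big(\mu(j,k) T_{jk}\big)_{j,k}
\]
is a sum of bounded operators and hence bounded. Here we use that the matrix assignment is linear and injective on bounded operators, so we may add bounded operators entrywise.

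For the unit, the identity operator has matrix $(\de_{jk})_{j,k}$. Applying the multiplier gives the diagonal matrix $\big(\mu(j,j)\de_{jk}\big)_{j,k}$, which is the element of $\ell^\infty(\B Z)$ given by $j \mapsto \mu(j,j)$. This function is bounded precisely because $\sup_j |\mu(j,j)| < \infty$. Hence it defines a bounded operator, and $1 \in \T{Dom}\big(\B M(\mu)\big)$.

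For the involution, take $T$ in the domain and write $M := \B M(\mu)(T)$, a bounded operator with $M_{jk} = \mu(j,k) T_{jk}$. Using $(T^*)_{jk} = \ov{T_{kj}}$ and the hypothesis $\ov{\mu(k,j)} = \mu(j,k)$, we compute
\[
\mu(j,k)(T^*)_{jk} = \ov{\mu(k,j)}\,\ov{T_{kj}} = \ov{M_{kj}} = (M^*)_{jk} .
\]
So the matrix $\big(\mu(j,k)(T^*)_{jk}\big)_{j,k}$ equals the matrix of the bounded operator $M^*$. Therefore $T^*$ lies in the domain and $\B M(\mu)(T^*) = M^* = \big(\B M(\mu)(T)\big)^*$.

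Nothing here is a genuine obstacle. The only point needing care is the identification of bounded operators with infinite matrices whose finite truncations are uniformly bounded. This guarantees that an entrywise identity between matrices of bounded operators yields an identity of operators, which justifies each step above.
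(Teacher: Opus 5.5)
Your proposal is correct and follows the same route as the paper. The paper leaves the lemma as an elementary exercise and points only to the identity $(T^*)_{jk} = \ov{T_{kj}}$, which is the key step in your involution computation; your checks of linearity, of the unit via $\sup_j |\mu(j,j)| < \infty$, and of $*$-invariance are exactly what the paper's convention for a (not necessarily closed) operator system requires.
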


Define the function $\de_q : \B Z \ti \B Z \to \B C$ by putting
\[
\de_q(j,k) := i \cd [j - k]_q \q \T{for all } j,k \in \B Z .
\]

The next lemma asserts that the unbounded Schur multiplier $\B M(\de_q) : \T{Dom}\big( \B M(\de_q) \big) \to \B B\big(\ell^2(\B Z)\big)$ extends the $q$-derivative from Section \ref{s:diffcalc}. As described in Section \ref{s:conventions} we identify $\C O(S^1)$ with a unital $*$-subalgebra of $\B B\big(\ell^2(\B Z)\big)$ by sending $z$ to the bilateral shift $U$.

\begin{lemma}\label{l:extension}
It holds that $\C O(S^1) \su \T{Dom}\big( \B M(\de_q) \big)$ and we have the identity $\B M(\de_q)(f) = \dif_q(f)$ for all $f \in \C O(S^1)$.
\end{lemma}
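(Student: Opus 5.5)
By linearity of both $\B M(\de_q)$ (on its domain, which is a linear subspace) and $\dif_q$, it suffices to treat the basis elements $z^n$, $n \in \B Z$. Under the identification $z \mapsto U$, the element $z^n$ corresponds to $U^n$, whose matrix units are $(U^n)_{jk} = \inn{e_j, e_{k+n}} = \de_{j,k+n}$. Thus $U^n$ is the infinite matrix supported on the $n$-th diagonal $\{(j,k) \mid j - k = n\}$, with all entries there equal to $1$.

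Multiplying entrywise by $\de_q(j,k) = i[j-k]_q$ gives
\[
\big(\de_q(j,k) \cd (U^n)_{jk}\big)_{j,k \in \B Z} = \big( i [n]_q \, \de_{j,k+n} \big)_{j,k \in \B Z} = i[n]_q \cd (U^n)_{jk}.
\]
This holds because the Schur multiplier symbol is constant, equal to $i[n]_q$, along the only diagonal where $U^n$ has non-zero entries. The resulting matrix is the bounded operator $i[n]_q U^n$, so $U^n \in \T{Dom}\big(\B M(\de_q)\big)$ and $\B M(\de_q)(U^n) = i[n]_q U^n$. This is exactly $\dif_q(z^n) = i[n]_q z^n$ under the identification.

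Finally, for a general $f = \sum_n c_n z^n \in \C O(S^1)$, a finite sum, the entrywise product is the finite sum $\sum_n c_n i[n]_q U^n$, which is bounded. Hence $f$ lies in the domain, and linearity gives $\B M(\de_q)(f) = \dif_q(f)$. There is no real obstacle here. The only point requiring care is checking that the domain is closed under finite linear combinations, which is immediate because entrywise multiplication is linear and finite sums of bounded operators are bounded.
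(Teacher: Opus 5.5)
Your proposal is correct and follows the paper's proof: both observe that $U^n$ is supported on the diagonal $j-k=n$, where $\de_q$ is constantly $i[n]_q$, so $\B M(\de_q)(U^n) = i[n]_q U^n$. The paper leaves the extension to finite linear combinations implicit, and you spell it out.
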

\begin{proof}
  Let $n \in \B Z$ and record that the infinite matrix associated with $U^n$ satisfies that $U^n_{n+k,k} = 1$ for all $k \in \B Z$ and all other entries are equal to zero. Since $\de_q(n+k,k) = i \cd [n]_q$ for all $k \in \B Z$ we get that $U^n \in \T{Dom}\big( \B M(\de_q) \big)$ with
  \[
\B M(\de_q)(U^n) = i \cd [n]_q \cd U^n . \qedhere
\]
\end{proof}

Because of Lemma \ref{l:extension} we apply the notation $\dif_q := \B M(\de_q)$ and refer to this Schur multiplier as the \emph{$q$-derivative}. Remark that Lemma \ref{l:opsys} entails that the domain $\T{Dom}\big( \B M(\de_q) \big) \su \B B(\ell^2(\B Z))$ is an operator system and that $\dif_q(T^*) = \dif_q(T)^*$ for all $T \in \T{Dom}\big( \B M(\de_q) \big)$. 
%
%

\begin{definition}
  The \emph{$q$-Lipschitz operators} is the domain of the $q$-derivative and this operator system is denoted by
  \[
\Lip_q\big( \ell^2(\B Z)\big) := \T{Dom}\big( \B M(\de_q) \big) .
\]
  The \emph{$q$-Lipschitz functions} $\Lip_q(S^1) \su C(S^1)$ is the operator system defined as the intersection
  \[
\Lip_q(S^1) := C(S^1) \cap \Lip_q\big( \ell^2(\B Z)\big) .
  \]
\end{definition}

Let us briefly investigate the situation where $q = 1$.

\begin{proposition}\label{p:oneLipschitz}
The $1$-Lipschitz functions $\Lip_1(S^1)$ agrees with the Lipschitz functions associated to the standard arc length metric on the unit circle. Moreover, for $f \in \Lip_1(S^1)$ it holds that the operator norm of the derivative $\| \dif_1(f) \|_\infty$ agrees with the associated Lipschitz constant for the Lipschitz function $f$.
\end{proposition}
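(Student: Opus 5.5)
Let $f \in C(S^1)$ with Fourier coefficients $\hat f(n)$. As an operator in $C^*(U)$, $f$ is the Laurent matrix with $f_{jk} = \hat f(j-k)$. The $1$-derivative $\dif_1 = \B M(\de_1)$ multiplies entries by $i(j-k)$, so the matrix $\big( i(j-k)\hat f(j-k) \big)_{j,k}$ is again a Laurent matrix. A Laurent matrix $(c_{j-k})_{j,k}$ defines a bounded operator on $\ell^2(\B Z)$ if and only if $(c_n)_n$ is the Fourier coefficient sequence of some $g \in L^\infty(S^1)$. In that case the operator is multiplication by $g$ under Fourier transform, and its norm equals $\|g\|_{L^\infty}$. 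This is the classical characterization of bounded Laurent/Toeplitz operators; I would quote it, or prove it quickly by identifying the commutant of $U$ with $L^\infty(S^1)$ acting on $L^2(S^1)$. So $f \in \Lip_1(S^1)$ if and only if there is $g \in L^\infty(S^1)$ with $\hat g(n) = in\hat f(n)$ for all $n$, i.e. the distributional derivative $\frac{df}{d\te}$ lies in $L^\infty$. Moreover $\|\dif_1(f)\|_\infty = \|\tfrac{df}{d\te}\|_{L^\infty}$.

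The second step is the classical fact that a continuous function on $S^1$ is Lipschitz for the arc length metric if and only if its distributional derivative is in $L^\infty$, with Lipschitz constant equal to $\|f'\|_{L^\infty}$. I would argue both directions explicitly.

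\emph{Lipschitz implies bounded derivative.} If $f$ is Lipschitz with constant $L$, then $\te \mapsto f(e^{i\te})$ is absolutely continuous. Rademacher's theorem in one dimension (or just Lebesgue differentiation of absolutely continuous functions) gives a.e. differentiability with $|f'| \leq L$. The distributional derivative equals this pointwise derivative; in particular integration by parts gives $\hat{f'}(n) = in\hat f(n)$. Hence $\|f'\|_\infty \leq L$.

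\emph{Bounded derivative implies Lipschitz.} Suppose $g \in L^\infty$ with $\hat g(n) = in\hat f(n)$. Then $\hat g(0)=0$, so $G(\te) = \int_0^\te g$ is $2\pi$-periodic and continuous. Since $G - f$ has all nonzero Fourier coefficients equal to zero, it is constant, so $f = G + c$. Then
\[
|f(e^{i\te}) - f(e^{i\phi})| \leq \|g\|_\infty \cd |\te - \phi|,
\]
and choosing the shorter arc gives a bound by $\|g\|_\infty$ times the arc length distance. Together the two directions give equality of the constants.

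The only genuinely nontrivial point is the identification of bounded Laurent matrices with $L^\infty$ symbols together with the norm equality. I would handle it by conjugating with the Fourier unitary $\ell^2(\B Z) \cong L^2(S^1)$. The Laurent matrix $(c_{j-k})$ commutes with $U$, which corresponds to multiplication by $z$. A bounded operator commuting with multiplication by $z$ commutes with $L^\infty(S^1)$, since $L^\infty(S^1)$ is the von Neumann algebra generated by $z$. Because $L^\infty(S^1)$ is maximal abelian, the operator is multiplication by some $g \in L^\infty$, and $g$ is obtained as the image of the constant function $1$, whose Fourier coefficients are $c_n$. Its operator norm is $\|g\|_{L^\infty}$. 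Everything else is routine.
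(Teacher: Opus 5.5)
Your proof is correct, but it follows a different route from the paper.

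\textbf{The paper's route.} It never uses Fourier coefficients or $L^\infty$ symbols. It introduces the rotation action $\sigma_t(f)(z) = f(e^{-it}z)$ and notes that the arc length Lipschitz constant equals $\sup_{t\neq 0}\|\sigma_t(f)-f\|_\infty/|t|$. It observes that $\sigma_t$ is implemented by $e^{itN}$ with $N e_k = k e_k$. It then invokes Christensen's theorem on Lipschitz operators twice. The first application identifies this supremum with the norm of the closure of $[N,f]$. The second identifies boundedness of $[N,f]$ with boundedness of the matrix $\big((j-k)f_{jk}\big)_{j,k}$, i.e.\ with membership in the domain of $\B M(\de_1)$.

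\textbf{Your route.} You use the Laurent structure $f_{jk}=\hat f(j-k)$ of elements of $C^*(U)$. You then use the classical description of bounded Laurent operators as multiplication operators with $L^\infty$ symbol and norm equality. Your justification of this via the commutant of $U$ and maximal abelianness of $L^\infty(S^1)$ is correct. Finally you use the real-variable fact that Lipschitz functions on the circle are exactly those with $L^\infty$ distributional derivative, with Lipschitz constant $\|f'\|_{L^\infty}$. You handle both directions of that fact correctly, including uniqueness of the symbol and the periodicity argument based on $\hat g(0)=0$.

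\textbf{What each buys.} The paper's argument is shorter and purely operator theoretic. Its second step applies verbatim to arbitrary bounded operators, identifying $\Lip_1(\ell^2(\B Z))$ with the Lipschitz operators for $N$. However, it delegates all the analysis to Christensen's theorem. Your argument is self-contained modulo standard harmonic analysis. It also gives extra concrete information: $\dif_1(f)$ is multiplication by the a.e.\ derivative $\frac{d}{d\te}f(e^{i\te})$, an element of $L^\infty(S^1)$ that need not lie in $C(S^1)$. On the other hand, it relies on translation invariance, so it is tied to $C^*(U)$ and says nothing about non-Laurent operators in the domain of $\dif_1$.
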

\begin{proof}
  Define the strongly continuous action $\si$ of the real line on $C(S^1)$ by putting $\si_t(f)(z) := f(e^{-it} \cd z)$ for all $t \in \B R$ and $z \in S^1$.

  For a continuous function $f : S^1 \to \B C$, it can be verified that $f$ is a Lipschitz function for the standard metric on $S^1$ if and only if the supremum
  \begin{equation}\label{eq:suplip}
\sup_{t \in \B R \sem \{0\}} \frac{\| \si_t(f) - f\|_\infty}{|t|}
\end{equation}
is finite. And in this case, the above supremum agrees with the associated Lipschitz constant.

Let us introduce the selfadjoint unbounded operator $N : \T{Dom}(N) \to \ell^2(\B Z)$ determined by the formula $N(e_k) = k \cd e_k$ on the core given by the algebraic linear span of the basis vectors $e_k$ for $k \in \B Z$. We say that a continuous function $f : S^1 \to \B C$ is a Lipschitz operator with respect to $N$, if the following holds
\begin{itemize}
\item $f$ preserves the domain of $N$ and the commutator $[N,f] : \T{Dom}(N) \to \ell^2(\B Z)$ extends to a bounded operator on $\ell^2(\B Z)$.
\end{itemize}
For $t \in \B R$ it holds that the unitary operator $e^{i t N}$ (formed using the functional calculus for selfadjoint unbounded operators) is given by the formula $e^{it N}(e_k) = e^{it k} \cd e_k$ for $k \in \B Z$. Consequently, $N$ induces the strongly continuous action $\si$ in so far that
\[
\si_t(f) = e^{-itN} f e^{it N}  \q \T{for all } f \in C(S^1) .
\]

For a continuous function $f : S^1 \to \B C$, it is therefore a consequence of \cite[Theorem 3.8]{Chr:WDO} that the supremum in \eqref{eq:suplip} is finite if and only if $f$ is a Lipschitz operator with respect to $N$. In this case, the supremum in \eqref{eq:suplip} agrees with the operator norm of the bounded extension of the commutator $[N,f] : \T{Dom}(N) \to \ell^2(\B Z)$.

For every $T \in \B B\big(\ell^2(\B Z)\big)$ and $j,k \in \B Z$, we now remark that
\[
-i \cd \de_1(j,k) T_{jk} = (j - k) T_{jk} = \inn{N e_j, T e_k} - \inn{e_j, T N e_k} .
\]

Applying \cite[Theorem 3.8]{Chr:WDO} one more time, we therefore get that a continuous function $f : S^1 \to \B C$ is a $1$-Lipschitz function if and only if $f$ is a Lipschitz operator with respect to $N$. In this situation, $-i \cd \dif_1(f)$ agrees with the bounded extension of the commutator $[N,f] : \T{Dom}(N) \to \ell^2(\B Z)$. 

This proves the result of the proposition. 
\end{proof}






    \section{\(q\)-integration}\label{sec:qintegral}
    We remind the reader that \(q\in (0,1]\) is a fixed number. Our goal is now to construct a $q$-analogue of the classical Riemann integral for continuous functions on the unit circle $S^1$ equipped with the Haar measure. In our language, this means that we are looking for an inverse to the $q$-derivative $\dif_q : \C O(S^1) \to \C X_q$ up to addition of constant functions. In line with the investigations carried out in Section \ref{sec:extending} it is natural for us to consider a much more general question, namely whether we can find an inverse to the extended $q$-derivative $\dif_q$ up to addition of elements from the kernel of $\dif_q$. The content of this section relies on the results obtained in \cite[Section 5.4 and 5.5]{JKDK_QSU2} where a $q$-integral is constructed in a different context by means of Schur multipliers. This means that the properties of our $q$-integral are derived from Grothendieck's theorem on Schur multipliers, see e.g. \cite[Theorem 5.1]{Pis:SPC} and \cite[Corollary 8.8]{paulsen}. 

We start out by computing the kernel of the extended $q$-derivative $\dif_q : \Lip_q\big( \ell^2(\B Z) \big) \to \B B\big(\ell^2(\B Z)\big)$.

   \begin{lemma}
It holds that $\ker(\dif_q) = \ell^\infty(\B Z)$.
    \end{lemma}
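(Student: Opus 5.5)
The plan is to prove the two inclusions directly from the definition of the unbounded Schur multiplier $\B M(\de_q)$. The only input needed is that the $q$-integers vanish exactly at zero.

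\emph{Step 1: $[n]_q = 0$ if and only if $n = 0$.} For $q = 1$ this is immediate since $[n]_1 = n$. For $q \in (0,1)$ the map $n \mapsto q^n - q^{-n}$ is strictly decreasing on $\B Z$ and vanishes at $n = 0$. Hence $[n]_q = 0$ exactly when $n = 0$. Consequently $\de_q(j,k) = i [j-k]_q$ vanishes if and only if $j = k$.

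\emph{Step 2: $\ell^\infty(\B Z) \su \ker(\dif_q)$.} Take $f \in \ell^\infty(\B Z)$, viewed as the diagonal matrix $\big(f(j)\de_{jk}\big)_{j,k}$. Every nonzero entry sits on the diagonal, where $\de_q(j,j) = 0$. Therefore the matrix $\big(\de_q(j,k) f_{jk}\big)_{j,k}$ is the zero matrix, which is bounded. So $f \in \Lip_q\big(\ell^2(\B Z)\big)$ and $\dif_q(f) = 0$.

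\emph{Step 3: $\ker(\dif_q) \su \ell^\infty(\B Z)$.} Take $T \in \Lip_q\big(\ell^2(\B Z)\big)$ with $\dif_q(T) = 0$. Then $i[j-k]_q T_{jk} = 0$ for all $j,k \in \B Z$. By Step 1, $T_{jk} = 0$ whenever $j \neq k$, so $T$ is diagonal. A diagonal bounded operator has bounded diagonal, since $|T_{jj}| \leq \|T\|_\infty$. Under the identification of $\ell^\infty(\B Z)$ with the diagonal infinite matrices, this gives $T \in \ell^\infty(\B Z)$.

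There is no substantial obstacle. The only care point is Step 1 for $q < 1$, which follows from strict monotonicity of $q^n - q^{-n}$ in $n$.
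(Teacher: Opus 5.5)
Your proposal is correct and follows the same approach as the paper, which proves the lemma in one line by noting that $\de_q(j,k) = 0$ if and only if $j = k$. You spell out the monotonicity argument for $[n]_q \neq 0$ when $n \neq 0$ and the two inclusions, which the paper leaves implicit.
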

    \begin{proof}
This follows immediately by noting that $\de_q(j,k) = 0$ if and only if $j = k$.
    \end{proof}

Our $q$-integral will be the Schur multiplier associated with the function $\psi_q : \B Z \ti \B Z \to \B C$ defined by
\[
\psi_q(j,k) := \fork{ccc}{ -i \cd \frac{1}{[j-k]_q} & \T{for} & j \neq k \\ 0 & \T{for} & j = k} .
\]
Slight modifications to the proof of \cite[Lemma 5.4.3]{JKDK_QSU2} yields the following key result. Notice in this respect that $[n]_q  \geq n$ for all $n \in \B N_0$ (see the proof of \cite[Lemma 5.4.2]{JKDK_QSU2}). 

    \begin{lemma}\label{l:intesti}
 The Schur multiplier $\B M( \psi_q)$ is bounded and we have the estimate 
$\big\| \B M(\psi_q) \big\|_{cb} \leq \frac{\pi}{\sqrt{3}}$ on the corresponding cb-norm.
   \end{lemma}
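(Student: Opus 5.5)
The plan is to reduce to a Toeplitz Schur multiplier and bound it via Fourier analysis on the circle. Write $\psi_q(j,k) = \phi_q(j-k)$, where $\phi_q(0) = 0$ and $\phi_q(n) = -i/[n]_q$ for $n \neq 0$. The key tool is the standard fact, see e.g. \cite[Chapter 8]{paulsen} or \cite[Theorem 5.1]{Pis:SPC} together with Bo\.zejko--Fendler type arguments, that for a Toeplitz symbol $\phi : \B Z \to \B C$ the Schur multiplier $\B M(\phi(j-k))$ is bounded with cb-norm at most $\|\mu\|$ whenever $\phi$ is the Fourier transform of a complex Borel measure $\mu$ on $S^1$, i.e.\ $\phi(n) = \int z^{-n}\, d\mu$. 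Indeed, if $\phi(j-k) = \int \overline{z^j} z^k \, d\mu(z)$, then $\B M(\psi)(T) = \int \overline{u_z} T u_z^* \, d\mu$-type averages of unitary conjugations by the diagonal unitaries $e_j \mapsto z^j e_j$. Equivalently, $\psi(j,k) = \inn{\xi_j, \eta_k}$ with $\xi_j = z^j |h|^{1/2}$ and $\eta_k = z^k h/|h|^{1/2}$ in $L^2(|\mu|)$, which gives the Grothendieck factorization with $\sup\|\xi_j\|\|\eta_k\| \leq \|\mu\|$.

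So it suffices to exhibit $\phi_q$ as Fourier coefficients of an $L^1$ function $g_q$ with $\|g_q\|_{L^1} \leq \pi/\sqrt 3$. For $q = 1$ this is classical: $\phi_1(n) = -i/n$ are, up to a sign convention, the Fourier coefficients of the sawtooth $g_1(\te) = \pi - \te$ on $(0,2\pi)$ (normalized Haar measure). However, $\|g_1\|_{L^1} = \pi/2 < \pi/\sqrt3$, and the $L^1$ route does not transfer cleanly to $q<1$. Since the stated constant is $\pi/\sqrt3 = \sqrt{\sum_{n\neq0}1/n^2}$, the intended argument is more likely a Hilbert--Schmidt type Grothendieck factorization. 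I would therefore rather follow the proof of \cite[Lemma 5.4.3]{JKDK_QSU2}: split $\psi_q = \psi_q^+ + \psi_q^-$ according to $j>k$ and $j<k$. Use that $[n]_q \geq n$ for $n\in\B N_0$ and that $1/[n]_q$ is a positive decreasing, indeed log-convex or completely monotone type, sequence in $n$.

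For a decreasing convex null sequence $a_n = 1/[n]_q$, one can write $a_{j-k}$ for $j>k$ as $\sum_{m} (a_m - a_{m+1}) \mathbf 1_{\{0<j-k\leq m\}}$, i.e.\ a positive combination of triangular truncations of Toeplitz bands. A neater factorization is $a_{j-k} = \inn{\xi_j,\eta_k}$ in $\ell^2(\B Z)$ with $\xi_j = (\sqrt{b_{j-l}})_l$ and $\eta_k = (\sqrt{b_{l-k}})_l$ restricted to $l$ between $k$ and $j$, where $b$ is chosen so that the convolution reproduces $a$; the norms are then bounded by $\big(\sum_n a_n^2\big)^{1/2}$-type quantities. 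Combining the $\pm$ parts, the factorization vectors lie in orthogonal copies, so their norms add in squares. This gives $\|\B M(\psi_q)\|_{cb} \leq \big(\sum_{n\neq 0} [n]_q^{-2}\big)^{1/2} \leq \big(2\sum_{n\geq1} n^{-2}\big)^{1/2} = \pi/\sqrt3$, using $[n]_q \geq n$. The cb-norm equals the norm by the remark after Definition \ref{d:unbschur}.

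The main obstacle is the factorization step: producing vectors $\xi_j,\eta_k$ with $\psi_q(j,k) = \inn{\xi_j,\eta_k}$ and $\|\xi_j\|\|\eta_k\| \leq (\sum_{n\neq0}[n]_q^{-2})^{1/2}$ uniformly. What I would try first is to mimic \cite[Lemma 5.4.3]{JKDK_QSU2} verbatim, taking the vectors $\xi_j = \sum_{n\neq 0} [n]_q^{-1} e_{(j, n)}$-style vectors in $\ell^2(\B Z\ti\B Z)$, indexed so that only the diagonal $n = j-k$ term pairs with $\eta_k = e_{(k+n,n)}$. Concretely, set $\xi_j = \sum_{n\neq0} \ov{\psi_q(j,j-n)}\, e_{j,n}$ and $\eta_k = \sum_{n} e_{k+n,n}$. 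The second vector has infinite norm, so it needs the usual rebalancing by weights $w_n$ with $\sum_n w_n^2 |\psi_q|^2$ and $\sum_n w_n^{-2}$ arranged via Cauchy--Schwarz. Adjusting these weights to get exactly $\pi/\sqrt3$ is where I expect the real work to be.
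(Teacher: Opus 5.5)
There is a genuine gap. The factorization step, which you yourself call the main obstacle, is never carried out, and the concrete attempts you offer cannot produce the bound.

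\textbf{Your last paragraph.} Any vectors of the form $\xi_j=\sum_{n\neq 0}\alpha_n e_{(j,n)}$ and $\eta_k=\sum_{n\neq 0}\beta_n e_{(k+n,n)}$ with $\overline{\alpha_n}\beta_n=\phi_q(n)$, weights included, satisfy by Cauchy--Schwarz
\[
\|\xi_j\|\cdot\|\eta_k\| = \Big(\sum_{n\neq 0}|\alpha_n|^2\Big)^{1/2}\Big(\sum_{n\neq 0}|\beta_n|^2\Big)^{1/2} \geq \sum_{n\neq 0}\frac{1}{|[n]_q|} .
\]
Giving each diagonal its own orthogonal direction just reproduces the trivial ``sum over the diagonals'' estimate. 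That estimate is $+\infty$ for $q=1$ and unbounded as $q\to 1$, so no rebalancing of the weights can yield $\pi/\sqrt{3}$.

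\textbf{Your fourth paragraph.} The factorization there is not a Grothendieck factorization at all. Restricting to $l$ between $k$ and $j$ makes $\xi_j$ depend on $k$.

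\textbf{Combining the $\pm$ halves.} Suppose $C_\pm=\pi/\sqrt{6}$ are the bounds for the two halves. If both families of vectors are placed in orthogonal copies, you only get $C_+ + C_- = \sqrt{2}\,\pi/\sqrt{3}$, not $(C_+^2+C_-^2)^{1/2}$. The norms add in squares only if the $\xi$-vectors are shared and only the $\eta$-vectors are orthogonal.

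That last observation is the missing idea, and it needs neither weights nor splitting. In $\ell^2(\mathbb{Z})$ put $\xi_j:=e_j$ and $\eta_k:=\sum_{n\neq 0}\phi_q(n)e_{k+n}$. Then:
\begin{itemize}
\item $\langle \xi_j,\eta_k\rangle=\phi_q(j-k)=\psi_q(j,k)$;
\item $\|\xi_j\|=1$;
\item $\|\eta_k\|^2=\sum_{n\neq 0}[n]_q^{-2}\leq \sum_{n\neq 0}n^{-2}=\pi^2/3$, because $|[n]_q|\geq |n|$.
\end{itemize}
Grothendieck's theorem \cite[Theorem 5.1]{Pis:SPC} then gives $\|\mathbb{M}(\psi_q)\|_{cb}\leq \pi/\sqrt{3}$. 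This Grothendieck-factorization argument is what the paper imports from \cite[Lemma 5.4.3]{JKDK_QSU2}, together with $[n]_q\geq n$.

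Equivalently, the Fourier route you abandoned closes in one line. Since $\phi_q\in\ell^2(\mathbb{Z})$, the function $g_q$ with Fourier coefficients $\phi_q$ lies in $L^2(S^1)$. Cauchy--Schwarz for the normalized Haar measure then gives $\|g_q\|_{L^1}\leq \|g_q\|_{L^2}=\|\phi_q\|_{\ell^2}\leq \pi/\sqrt{3}$. Under Plancherel, the vectors $z^j$ and $g_q z^k$ in $L^2(S^1)$ correspond exactly to $\xi_j$ and $\eta_k$ above. So your claim that this route ``does not transfer cleanly to $q<1$'' is mistaken: it is precisely where the constant $\pi/\sqrt{3}$ comes from.
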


 We apply the suggestive notation $\int_q := \B M(\psi_q)$ and refer to $\int_q : \B B\big( \ell^2(\B Z) \big) \to \B B\big( \ell^2(\B Z) \big)$ as the $q$-integral.


    In order to understand the relationship between the $q$-derivative and the $q$-integral we introduce the conditional expectation $E : \B B\big(\ell^2(\B Z)\big) \to \B B\big(\ell^2(\B Z)\big)$ given by
\[
E\big( (T_{jk})_{j,k \in \B Z} \big) := (T_{jk} \de_{jk})_{j,k \in \B Z}
\]
and record that the image of $E$ agrees with $\ell^\infty(\B Z)$. Remark that $E$ is the bounded Schur multiplier associated with the Kronecker delta function $\de : \B Z \ti \B Z \to \B C$.


\begin{proposition}\label{p:fundamental}
  We have the identities
  \[
T - \int_q \dif_q(T) = E(T) \, \, \mbox{ and } \, \, \, S - \dif_q \int_q S = E(S)
\]
for all $T \in \Lip_q\big( \ell^2(\B Z) \big)$ and $S \in \B B\big(\ell^2(\B Z)\big)$. In particular, it holds that the image of $\int_q$ is contained in the domain of $\dif_q$.
\end{proposition}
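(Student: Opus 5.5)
The plan is to verify both identities entrywise, since all maps involved are Schur multipliers: the extended $q$-derivative $\dif_q = \B M(\de_q)$, the $q$-integral $\int_q = \B M(\psi_q)$ and the conditional expectation $E = \B M(\de)$. Two bounded operators coincide precisely when all their matrix entries $\inn{e_j, \cdot \, e_k}$ coincide, so it suffices to compute matrix coefficients. The underlying pointwise identity is
\[
\psi_q(j,k) \cd \de_q(j,k) = 1 - \de_{jk} \q \T{for all } j,k \in \B Z ,
\]
which is immediate from the definitions: for $j \neq k$ we get $-i \cd \frac{1}{[j-k]_q} \cd i \cd [j-k]_q = 1$, and for $j = k$ both factors vanish.

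For the first identity, I take $T \in \Lip_q\big(\ell^2(\B Z)\big)$. By definition of the domain, $\dif_q(T) = \big(\de_q(j,k) T_{jk}\big)_{j,k}$ is a bounded operator. Lemma \ref{l:intesti} says that $\int_q$ is a bounded Schur multiplier defined on all of $\B B\big(\ell^2(\B Z)\big)$, so $\int_q \dif_q(T)$ is defined and has entries $\psi_q(j,k)\de_q(j,k) T_{jk} = (1-\de_{jk}) T_{jk}$. These are exactly the entries of $T - E(T)$, which gives the first identity.

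For the second identity, I take $S \in \B B\big(\ell^2(\B Z)\big)$. Lemma \ref{l:intesti} gives that $\int_q S$ is bounded, with entries $\psi_q(j,k) S_{jk}$. Multiplying by $\de_q(j,k)$ produces the matrix $\big((1-\de_{jk}) S_{jk}\big)_{j,k}$, which equals $S - E(S)$ and is therefore bounded, because $E$ is bounded. By Definition \ref{d:unbschur}, this shows $\int_q S \in \T{Dom}\big(\B M(\de_q)\big) = \Lip_q\big(\ell^2(\B Z)\big)$, and the same computation gives $\dif_q \int_q S = S - E(S)$. The final claim about the image of $\int_q$ follows at the same time.

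There is no genuine obstacle. The only point that needs care is the domain issue in the second identity: membership of $\int_q S$ in the domain of $\dif_q$ is not automatic. It follows from two facts, namely the boundedness of $\int_q$ supplied by Lemma \ref{l:intesti} and the boundedness of the conditional expectation $E$, which together ensure that the entrywise product defines a bounded operator.
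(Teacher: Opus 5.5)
Your proposal is correct and follows the same route as the paper, which derives both identities from the pointwise relation $1 - \psi_q(j,k)\de_q(j,k) = \de_{jk}$ together with the entrywise action of Schur multipliers. Your treatment of the domain question in the second identity is the detail the paper leaves implicit under ``basic properties of Schur multipliers'': the entrywise product gives the bounded operator $S - E(S)$, which forces $\int_q S \in \Lip_q(\ell^2(\B Z))$.
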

\begin{proof}
  This is a consequence of the identity
  \[
1 - \psi_q(j,k) \cd \de_q(j,k) = \de_{jk} \q j,k \in \B Z
\]
and basic properties of Schur multipliers.
\end{proof}

\begin{corollary}
The image of the $q$-derivative $\dif_q : \Lip_q\big( \ell^2(\B Z) \big) \to \B B\big(\ell^2(\B Z)\big)$ agrees with $\ker(E)$ and the image of the $q$-integral $\int_q : \B B\big(\ell^2(\B Z)\big) \to \B B\big(\ell^2(\B Z)\big)$ agrees with the intersection $\Lip_q\big( \ell^2(\B Z) \big) \cap \ker(E)$.
\end{corollary}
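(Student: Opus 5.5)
The plan is to prove the two equalities by showing two inclusions each, using only Proposition \ref{p:fundamental} together with the facts that $E$ is the Schur multiplier of the Kronecker delta and that Schur multipliers compose by pointwise multiplication of symbols.

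\emph{Image of $\dif_q$.} First I show $\dif_q(T) \in \ker(E)$ for every $T \in \Lip_q\big(\ell^2(\B Z)\big)$. The diagonal entries of $\dif_q(T)$ are $\de_q(j,j) T_{jj} = i [0]_q T_{jj} = 0$, so $E \dif_q(T) = 0$. Conversely, let $S \in \ker(E)$. By Lemma \ref{l:intesti} the operator $\int_q S$ is bounded. By Proposition \ref{p:fundamental} it lies in $\Lip_q\big(\ell^2(\B Z)\big)$ and satisfies $\dif_q \int_q S = S - E(S) = S$. Hence $S$ lies in the image of $\dif_q$.

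\emph{Image of $\int_q$.} The inclusion of the image in $\Lip_q\big(\ell^2(\B Z)\big)$ is part of Proposition \ref{p:fundamental}. Moreover $\psi_q(j,j) = 0$, so the diagonal of $\int_q S$ vanishes, and therefore $\int_q S \in \ker(E)$. Conversely, let $T \in \Lip_q\big(\ell^2(\B Z)\big) \cap \ker(E)$. The first identity in Proposition \ref{p:fundamental} gives $T = \int_q \dif_q(T) + E(T) = \int_q \dif_q(T)$. Since $\dif_q(T)$ is a bounded operator, $T$ lies in the image of $\int_q$.

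There is no real obstacle. The only points needing care are the vanishing of the diagonal symbols, $\de_q(j,j) = 0$ and $\psi_q(j,j) = 0$, and the fact that Proposition \ref{p:fundamental} already guarantees $\int_q$ maps into the domain of $\dif_q$.
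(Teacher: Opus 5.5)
Your proof is correct and is the argument the paper has in mind: the paper gives no separate proof and treats the corollary as an immediate consequence of Proposition \ref{p:fundamental}. You combine its two identities with the vanishing of the diagonal symbols $\de_q(j,j)$ and $\psi_q(j,j)$ to get both inclusions in each equality, which is exactly what is needed.
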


\begin{proposition}\label{p:conditional}
  We have the estimate
  \[
\| T - E(T) \|_\infty \leq \frac{\pi}{\sqrt{3}} \cd \| \dif_q(T) \|_\infty \q \mbox{for all } T \in \Lip_q\big( \ell^2(\B Z) \big) .
\]
\end{proposition}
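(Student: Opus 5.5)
The estimate follows by combining Proposition \ref{p:fundamental} with Lemma \ref{l:intesti}.

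Fix $T \in \Lip_q\big( \ell^2(\B Z) \big)$. By the first identity in Proposition \ref{p:fundamental} we have
\[
T - E(T) = \int_q \dif_q(T) ,
\]
where $\dif_q(T) \in \B B\big(\ell^2(\B Z)\big)$ by the definition of the domain of the unbounded Schur multiplier $\B M(\de_q)$. Since $\int_q = \B M(\psi_q)$ is a bounded Schur multiplier, we may take operator norms on both sides. The operator norm of $\int_q$ is dominated by its cb-norm, and by Lemma \ref{l:intesti} the latter is at most $\frac{\pi}{\sqrt{3}}$. This gives
\[
\| T - E(T) \|_\infty = \Big\| \int_q \dif_q(T) \Big\|_\infty \leq \big\| \B M(\psi_q) \big\|_{cb} \cd \| \dif_q(T) \|_\infty \leq \frac{\pi}{\sqrt{3}} \cd \| \dif_q(T) \|_\infty .
\]

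The only point that needs care is the identity itself, which rests on the entrywise relation $1 - \psi_q(j,k) \cd \de_q(j,k) = \de_{jk}$. Applying the Schur multiplier $\B M(\psi_q)$ to the bounded matrix $\big(\de_q(j,k) T_{jk}\big)_{j,k \in \B Z}$ yields the matrix $\big(\psi_q(j,k)\de_q(j,k) T_{jk}\big)_{j,k \in \B Z}$, which equals $T - E(T)$ entry by entry. Since bounded operators are determined by their matrix entries, the identity holds as an equality of operators. So there is no real obstacle: the substantive content is the Grothendieck-type bound of Lemma \ref{l:intesti}, which we are taking as given.
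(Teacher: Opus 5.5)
Your proposal is correct and is essentially the paper's proof: you combine the identity $T - E(T) = \int_q \dif_q(T)$ from Proposition \ref{p:fundamental} with the bound $\big\| \B M(\psi_q) \big\|_{cb} \leq \frac{\pi}{\sqrt{3}}$ from Lemma \ref{l:intesti}. Your entrywise check of the identity via $1 - \psi_q(j,k)\de_q(j,k) = \de_{jk}$ is the same argument the paper gives for Proposition \ref{p:fundamental}.
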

\begin{proof}
  Let $T$ be a $q$-Lipschitz operator. An application of Lemma \ref{l:intesti} and Proposition \ref{p:fundamental} entails that
  \[
\| T - E(T) \|_\infty  = \| \int_q \dif_q(T) \|_\infty \leq \frac{\pi}{\sqrt{3}} \cd \| \dif_q(T) \|_\infty . \qedhere
  \]
\end{proof}

We end this section by specializing parts of our results to statements regarding the unit circle.

First of all, notice that the $q$-integral restricts to a bounded operator $\int_q : C(S^1) \to C(S^1)$ satisfying that $\int_q z^n = -i \cd \frac{1}{[n]_q} \cd z^n$ for $n \in \B Z \sem \{0\}$ and $\int_q 1 = 0$. It moreover holds that $E(U^n) = 0$ for $n \neq 0$ whereas $E(1) = 1$. Thus, letting $h : C(S^1) \to \B C$ denote the Haar state associated to the unit circle (so that $h$ is given by integration with respect to the normalized arc length measure) we obtain that $E(f) = h(f) \cd 1$ for all $f \in C(S^1)$.


\begin{proposition}\label{p:findiam}
  The kernel of the $q$-derivative $\dif_q : \Lip_q(S^1) \to \B B\big( \ell^2(\B Z)\big)$ agrees with the constant functions and for every $f \in \Lip_q(S^1)$ we have the identity
  \[
f - \int_q \dif_q(f) = h(f) \cd 1 
\]
as well as the estimate $\| f - h(f) \cd 1 \|_\infty \leq \frac{\pi}{\sqrt{3}} \cd \| \dif_q(f) \|_\infty$.
\end{proposition}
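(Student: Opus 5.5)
The plan is to specialize the operator-level statements of this section to $C(S^1)$, using the observation recorded just before the proposition: $E(f) = h(f) \cd 1$ for all $f \in C(S^1)$.

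\emph{Step 1 (kernel).} If $f \in \Lip_q(S^1)$ satisfies $\dif_q(f) = 0$, the kernel lemma gives $f \in \ell^\infty(\B Z) \cap C(S^1)$. So $f$ is a diagonal operator lying in $C^*(U)$. Using $E(f) = h(f)\cd 1$ together with $E(f) = f$ (since $f$ is diagonal), we conclude that $f = h(f) \cd 1$ is constant. Conversely, constants lie in $\C O(S^1) \su \Lip_q(S^1)$ and are annihilated by $\dif_q$ by Lemma \ref{l:extension}, since $[0]_q = 0$.

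\emph{Step 2 (identity).} Because $\Lip_q(S^1) \su \Lip_q(\ell^2(\B Z))$, Proposition \ref{p:fundamental} gives $f - \int_q \dif_q(f) = E(f)$. Substituting $E(f) = h(f) \cd 1$ yields the stated identity.

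\emph{Step 3 (estimate).} By Proposition \ref{p:conditional}, $\| f - E(f)\|_\infty \leq \frac{\pi}{\sqrt{3}} \| \dif_q(f)\|_\infty$, and again $E(f) = h(f)\cd 1$ gives the claim.

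The only point needing justification is the identity $E(f) = h(f) \cd 1$ for \emph{all} $f \in C(S^1)$, not just for Laurent polynomials. It holds on $\C O(S^1)$ by the computation $E(U^n) = \de_{n0}$. Both $E$ and $f \mapsto h(f)\cd 1$ are bounded (indeed contractive), and $\C O(S^1)$ is norm dense in $C(S^1) = C^*(U)$, so the identity extends by continuity. I expect no real obstacle beyond this density argument.
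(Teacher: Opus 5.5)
Your proposal is correct and follows essentially the same route as the paper. The paper also specializes the kernel lemma, Proposition~\ref{p:fundamental} and Proposition~\ref{p:conditional} to $C(S^1)$ via the identity $E(f) = h(f)\cdot 1$, which it deduces from $E(U^n) = 0$ for $n \neq 0$ and $E(1) = 1$; your density-and-continuity argument spells out the extension to all of $C(S^1)$ that the paper leaves implicit.
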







\section{Quantum metrics derived from the $q$-derivative}\label{sec:cqms}
We apply the extended $q$-derivative from Section \ref{sec:extending} to construct a seminorm $L_q$ on the operator system of $q$-Lipschitz operators, defining
\[
L_q(T) := \big\| \dif_q(T) \big\|_\infty \q \T{for all } T \in \Lip_q\big( \ell^2(\B Z) \big) .
\]
In this section, we investigate the properties of the pair $\big( \Lip_q\big( \ell^2(\B Z) \big), L_q \big)$ in light of Rieffel's theory of compact quantum metric spaces, see \cite{Rie:MSA,Rie:MSS}. 

The main result of this section is that the seminorm $L_q$ provides the $q$-Lipschitz functions $\Lip_q(S^1)$ with the structure of a compact quantum metric space. 

\subsection{Some background on compact quantum metric spaces}
Let us review the operator system approach to compact quantum metric spaces as introduced by Kerr in \cite{Ker:MQG}. The following concept of a \emph{slip-norm} plays a key role: 

\begin{definition}
  Let \(\X \) be an operator system. We say that a seminorm  \(L:\X\to[0,\infty)\) is a \emph{slip-norm}, if
  \begin{itemize}
  \item \(L(x)=L(x^*)\) for all \(x\in\X\) and $L(1) = 0$.
  \end{itemize}
\end{definition}

Recall that our (concrete) operator system $\X$ sits inside a unital $C^*$-algebra $A$. We apply the notation $X \su A$ for the complete operator system obtained by taking the norm-closure of $\X \su A$.

Being a compact quantum metric space is a characterization of how a slip-norm \(L : \X \to [0,\infty)\) interacts with the \emph{state space} of \(\X\), which we denote by \(S(\X)\). The state space $S(\C X)$ consists of the unital positive linear functionals on $\C X$. Notice that the restriction map induces a homeomorphism $S(X) \cong S(\C X)$. 

\begin{definition}
  Let \(L:\X\to[0,\infty)\) be a slip-norm. The \emph{Monge-Kantorovich metric} \(\rho_L:S(\X)\times S(\X)\to[0,\infty]\) associated with \(L\) is defined by
  \[
    \rho_L(\phi,\psi)=\sup\big\{|\phi(x)-\psi(x)| \mid L(x)\le 1 \big\}.
\]
\end{definition}

Except for the fact that the Monge-Kantorovich metric may take the value \(\infty\), it satisfies the axioms of a metric. We therefore obtain a topology on \(S(\X)\) by taking as a basis all the open balls (with respect to $\rho_L$).

\begin{definition}
  An operator system \(\X\) with slip-norm \(L\) is a \emph{compact quantum metric space}, if the metric topology induced by $\rho_L$ agrees with the weak-$\ast$\ topology on \(S(\X)\). In this case, we also say that $(\C X,L)$ is a quantum metric on $X$. 
\end{definition}

If the pair \((\X,L)\) is a compact quantum metric space, then \(\rho_L\) is finite due to the weak-$\ast$\ compactness and convexity of \(S(\X)\). Hence, in this situation $\rho_L$ is a metric in the usual sense.

We now define what it means to have finite diameter. To this end, let $[ \cd ] : \C X \to \C X / \B C$ denote the quotient map and equip the quotient space $\C X/\B C$ with the quotient norm $\| \cd \|_{\X/\B C}$.

\begin{definition}\label{d:findiam}
  Let \(L:\X\to[0,\infty)\) be a slip-norm. We say that the pair \((\X,L)\) has \emph{finite diameter}, if there exists an \(r>0\) such that
  \[
    \big\| [x] \big\|_{\X/\B C}\le r \cd L(x) \q \mbox{for all } x\in\X.
    \]
\end{definition}

We clarify that the pair $(\C X,L)$ has finite diameter in the sense of Definition \ref{d:findiam} if and only if the state space has finite diameter with respect to the Monge-Kantorovich metric $\rho_L$, see \cite[Proposition 1.6]{Rie:MSA}. In this case, we apply the notation $\T{diam}(\C X,L)$ for the diameter of the metric space $\big( S(\C X), \rho_L\big)$. Moreover, if $(\C X,L)$ has finite diameter it clearly holds that the kernel of $L$ agrees with the scalars $\B C$.

It is convenient for us to apply the characterization of compact quantum metric spaces in terms of finite dimensional approximations found as \cite[Theorem 3.1]{jk23}. We therefore review how this works.

\begin{definition}\label{cqms:def:approx}
  Let \(L : \X \to [0,\infty)\) be a slip-norm. Let further \(\epsilon,C>0\) be constants and let \(\mathcal Y\) be an operator system. We say that a pair \((\iota,\Phi)\) consisting of unital bounded operators \(\iota,\Phi:\X\to\mathcal Y\) is an \((\epsilon,C)\)-approximation of \((\X,L)\), if the following holds:
  \begin{enumerate}
  \item \(\frac{1}{C}\|x\|_\infty \le \|\iota(x)\|_\infty\) for all \(x\in\X\);
  \item \(\Phi(\X)\) is finite dimensional;
    \item \(\|\iota(x)-\Phi(x)\|_\infty\le \epsilon \cd L(x)\) for all \(x\in\X\).
  \end{enumerate}
\end{definition}

Remark that (1) implies that the unital bounded operator \(\iota : \C X \to \C Y\) induces a linear bounded isomorphism onto its image and the corresponding inverse is also bounded (with operator norm less than or equal to $C$). 
\begin{theorem}\label{cqms:thm:approx}
  Let \(L:\X\to[0,\infty)\) be a slip-norm. The following conditions are equivalent.
  \begin{enumerate}
  \item \((\X,L)\) is a compact quantum metric space;
    \item \((\X,L)\) has finite diameter and there exists a constant \(C>0\) such that for every \(\epsilon>0\) there exists an \((\epsilon,C)\)-approximation of \((\X,L)\).
  \end{enumerate}
\end{theorem}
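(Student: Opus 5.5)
We would prove the two implications separately. Both directions go through Rieffel's total boundedness criterion \cite{Rie:MSA,Rie:MSS}, in its operator system form \cite{Ker:MQG}. Assume $(\C X,L)$ has finite diameter. Then $(\C X,L)$ is a compact quantum metric space if and only if, for some (equivalently every) $r>0$ large enough, the set
\[
B_r := \big\{ x \in \C X \mid L(x) \leq 1 \, , \, \, \|x\|_\infty \leq r \big\}
\]
is totally bounded in norm. The standard formulation is for order unit spaces. To pass to the complex setting we use $L(x^*)=L(x)$: the real and imaginary parts $\frac{1}{2}(x+x^*)$ and $\frac{1}{2i}(x-x^*)$ then have $L$-value at most $L(x)$. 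So total boundedness of $B_r$ is equivalent to total boundedness of its selfadjoint part, and the order unit result applies. Throughout we use that finite diameter forces $\ker L = \B C$. We also use that every $x$ with $L(x)\leq 1$ can be shifted by a scalar into $B_r$ with $r$ a multiple of the diameter constant from Definition \ref{d:findiam}.

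For $(2)\Rightarrow(1)$, fix $r$ and $\ep>0$, and take an $(\ep,C)$-approximation $(\iota,\Phi)$. Since $\Phi$ is bounded and $\Phi(\C X)$ is finite dimensional, $\Phi(B_r)$ is a bounded subset of a finite dimensional normed space and hence totally bounded. Condition (3) puts $\iota(B_r)$ within distance $\ep$ of $\Phi(B_r)$, so $\iota(B_r)$ has a finite $2\ep$-net. Condition (1) says the inverse of $\iota$ on its image has norm at most $C$, so $B_r$ has a finite net of mesh comparable to $2C\ep$; we may take the net points inside $B_r$ at the cost of a factor $2$. The constant $C$ is independent of $\ep$ and $\ep$ is arbitrary, so $B_r$ is totally bounded. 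Together with finite diameter, Rieffel's criterion gives (1).

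For $(1)\Rightarrow(2)$, finite diameter follows from finiteness of $\rho_L$ and \cite[Proposition 1.6]{Rie:MSA}. For the approximations we take $\C Y := \ell^\infty(S(\C X))$, the bounded functions on the state space, and $\iota(x)(\phi):=\phi(x)$. The map $\iota$ is unital and contractive. For selfadjoint $x$ it is isometric, since $\|x\|_\infty = \sup_\phi |\phi(x)|$. Splitting into real and imaginary parts then gives $\|x\|_\infty\leq 2\|\iota(x)\|_\infty$, so $C=2$ works. Given $\ep>0$, the space $(S(\C X),\rho_L)$ is a compact metric space, so it is covered by finitely many $\rho_L$-balls $B(\phi_i,\ep)$, $i=1,\ldots,n$. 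Choose a continuous partition of unity $(g_i)$ subordinate to this cover and put
\[
\Phi(x)(\phi):=\sum_{i} g_i(\phi)\,\phi_i(x) .
\]
This map is unital, bounded and has image in the span of $g_1,\ldots,g_n$, so its image is finite dimensional. Moreover
\[
|\phi(x)-\Phi(x)(\phi)|\leq \sum_i g_i(\phi)\,|\phi(x)-\phi_i(x)| \leq \ep\, L(x) ,
\]
because $g_i(\phi)\neq 0$ forces $\rho_L(\phi,\phi_i)<\ep$. This verifies condition (3) and completes the implication.

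The main obstacle we expect is the bookkeeping in transferring Rieffel's order unit criterion to complex operator systems, in both directions. The points to check are the selfadjoint reductions and the factor $2$ in the norm comparison. The approximation estimates themselves are routine.
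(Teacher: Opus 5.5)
The paper gives no proof of its own for this theorem; it quotes it from \cite[Theorem 3.1]{jk23} and only invokes the implication $(2)\Rightarrow(1)$, in Theorem \ref{cqms:thm:cqms}, with $\iota$ the identity and $C=1$. Your proposal is a correct, self-contained proof, so there is no internal argument to compare against.

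For $(2)\Rightarrow(1)$, you take a finite net of $\Phi(B_r)$ and pull it back using $\|x-y\|_\infty\le C\,\|\iota(x)-\iota(y)\|_\infty$. Together with Rieffel's total boundedness criterion, that is exactly what is needed. The reduction to selfadjoint elements via $L(x^*)=L(x)$ is sound, including the fact that $\rho_L$ can be computed on selfadjoint elements.

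For $(1)\Rightarrow(2)$, your opening sentence overstates things: Rieffel's criterion is not used there. You argue directly with the evaluation map $\iota:\C X\to\ell^\infty(S(\C X))$, which satisfies $\|x\|_\infty\le 2\|\iota(x)\|_\infty$, and with a partition of unity on the compact metric space $(S(\C X),\rho_L)$. This is cleaner, and it also explains why Definition \ref{cqms:def:approx} allows a general constant $C$ rather than requiring $\iota$ to be isometric.

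Two small details deserve a line each.
\begin{enumerate}
\item Finiteness of $\rho_L$ does not follow from the topology alone: an extended metric can induce a compact topology while taking the value $\infty$. As the paper remarks, convexity rules this out. If $\rho_L(\phi,\psi)=\infty$, then $(1-t)\phi+t\psi\to\phi$ weak-$\ast$ as $t\to0$, yet its $\rho_L$-distance to $\phi$ stays infinite. You need this finiteness both to call $(S(\C X),\rho_L)$ a compact metric space and for the finite diameter claim.
\item The estimate $|\phi(x)-\phi_i(x)|\le\rho_L(\phi,\phi_i)\,L(x)$ in the case $L(x)=0$ uses $\ker L=\B C$, which you have from finite diameter.
\end{enumerate}
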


Let us emphasize that if $(\C X,L)$ is a compact quantum metric space and $\C Z \su \C X$ is an operator subsystem, then the pair $(\C Z,L)$ is automatically a compact quantum metric space as well. This can for instance be seen from Theorem \ref{cqms:thm:approx} or from \cite[Theorem 1.8]{Rie:MSA}.

\subsection{Quantum metrics on the circle}
Let $q \in (0,1]$ and consider the seminorm $L_q : \Lip_q(S^1) \to [0,\infty)$ which we defined in the very beginning of the present section. It clearly holds that $L_q$ is a slip-norm and we are now going to see that the pair \((\Lip_q(S^1),L_q)\) is a quantum metric on $C(S^1)$. First of all, our results on the $q$-integral from Section \ref{sec:qintegral} enable us to verify the finite diameter condition.

\begin{proposition}\label{cqms:prop:fdfoxonly}
 The pair \((\Lip_q(S^1),L_q)\) has finite diameter. In fact, it holds that
  \[
\| f \|_{\Lip_q(S^1)/\B C} \leq \frac{\pi}{ \sqrt{3}} \cd L_q(f) \q \mbox{for all } f \in \Lip_q(S^1)
\]
and we therefore have the estimate $\T{diam}\big( (\Lip_q(S^1),L_q) \big) \leq \frac{2\pi}{\sqrt{3}}$.
\end{proposition}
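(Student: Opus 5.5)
The plan is to combine Proposition \ref{p:findiam} with the definition of the quotient norm, and then pass to the diameter bound. For $f \in \Lip_q(S^1)$ the quotient norm is an infimum over constants, so
\[
\| f \|_{\Lip_q(S^1)/\B C} = \inf_{\lambda \in \B C} \| f - \lambda \cd 1 \|_\infty \leq \| f - h(f) \cd 1 \|_\infty .
\]
Proposition \ref{p:findiam} bounds the right hand side by $\frac{\pi}{\sqrt{3}} \cd \| \dif_q(f) \|_\infty = \frac{\pi}{\sqrt{3}} \cd L_q(f)$. This gives the displayed estimate, and hence finite diameter in the sense of Definition \ref{d:findiam} with $r = \frac{\pi}{\sqrt{3}}$. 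Nothing more is needed here, because the constant function $h(f) \cd 1$ lies in $\Lip_q(S^1)$.

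For the diameter bound, I take states $\phi,\psi \in S(\Lip_q(S^1))$ and an element $f$ with $L_q(f) \leq 1$. Since $\phi$ and $\psi$ are unital, $\phi(f) - \psi(f) = \phi(f - \lambda) - \psi(f - \lambda)$ for every scalar $\lambda$. I choose $\lambda = h(f)$, which is the choice that matches Proposition \ref{p:findiam}.

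The one point needing care is that states on an operator system have norm one. A unital positive functional on an operator system satisfies $\|\phi\| = \phi(1) = 1$; this is standard, and one can see it by extending to the norm-closure $X$ and applying Krein's extension theorem to the $C^*$-algebra. Using this, I get
\[
|\phi(f) - \psi(f)| \leq 2 \| f - h(f) \cd 1 \|_\infty \leq \frac{2\pi}{\sqrt{3}} .
\]
Taking the supremum over such $f$ gives $\rho_{L_q}(\phi,\psi) \leq \frac{2\pi}{\sqrt{3}}$, which is the claimed bound on the diameter.

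I do not expect a real obstacle. The only delicate step is the norm-one property of states on a possibly non-closed operator system, and that follows from the remark above that restriction gives $S(X) \cong S(\C X)$.
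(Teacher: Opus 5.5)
Your proposal is correct and follows the paper's route. The quotient-norm estimate comes straight from Proposition \ref{p:findiam}. Your direct bound $|\phi(f)-\psi(f)| \leq 2\| f - h(f)\cdot 1\|_\infty$, which uses that states have norm one, is the content of Rieffel's Proposition 1.6; the paper cites that result instead of reproving it.
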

\begin{proof}
  This follows immediately from Proposition \ref{p:findiam} and \cite[Proposition 1.6]{Rie:MSA}.
\end{proof}

It is relevant to remark that the result of Proposition \ref{p:conditional} actually yields the following estimate regarding $q$-Lipschitz operators instead of $q$-Lipschitz functions:
\[
\| T \|_{\Lip_q(\ell^2(\B Z))/ \ell^\infty(\B Z)} \leq \frac{\pi}{ \sqrt{3}} \cd L_q(T)
\q \T{for all } T \in \Lip_q\big(\ell^2(\B Z)\big) .
\]

In order to construct finite dimensional approximations of the pair \((\Lip_q(S^1),L_q)\) in line with Definition \ref{cqms:def:approx}, we introduce another Schur multiplier. 

Let \(M\in\N_0\) and define \(\gamma_M:\Z \ti \Z \to\B C\) by
\[
  \gamma_M(j,k):=\fork{ccc}{
    \frac{M+1-\abs{j-k}}{M+1} & \T{for} &  \abs{j-k}\le M \\
    0 & \T{for} & \abs{j-k}>M } . 
\]
The following lemma can be found as \cite[Lemma 5.4.4]{JKDK_QSU2}. 

\begin{lemma}\label{l:schurapp}
  The Schur multiplier $\B M(\ga_M)$ is bounded and we have the estimate $\big\|\mathbb M(\gamma_M)\big\|_{cb}\le 1$ on the corresponding cb-norm.
\end{lemma}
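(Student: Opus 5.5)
The statement to prove is Lemma \ref{l:schurapp}: $\B M(\ga_M)$ is a bounded Schur multiplier with $\|\B M(\ga_M)\|_{cb} \leq 1$.

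The plan is to exhibit $\ga_M$ as a Fejér-type kernel that factors through a Hilbert space, and then invoke Grothendieck's characterization of Schur multipliers, \cite[Theorem 5.1]{Pis:SPC}. That theorem says that if $\mu(j,k) = \inn{\xi_j, \eta_k}$ for vectors in some Hilbert space $\C K$, then $\|\B M(\mu)\|_{cb} \leq \sup_j \|\xi_j\| \cd \sup_k \|\eta_k\|$.

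For the factorization, take $\C K = \ell^2(\B Z)$ and set
\[
\xi_j = \eta_j := \frac{1}{\sqrt{M+1}} \sum_{l = j}^{j+M} e_l \q \T{for } j \in \B Z ,
\]
the normalized indicator vector of the window $\{j, \ldots, j+M\}$. Then $\inn{\xi_j, \eta_k}$ equals $\frac{1}{M+1}$ times the number of integers lying in both windows. That overlap count is $M+1-|j-k|$ when $|j-k| \leq M$ and $0$ otherwise. Hence $\inn{\xi_j,\eta_k} = \ga_M(j,k)$, and each vector has norm exactly $1$.

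Alternatively, one could argue directly. Since $\ga_M(j,k)$ depends only on $j-k$, the Schur multiplier can be written as
\[
\B M(\ga_M)(T) = \frac{1}{M+1}\sum_{l=0}^{M} P_l \Big(\sum_{m=0}^{M} \cdots\Big) ,
\]
but cleaner is the expression $\B M(\ga_M)(T) = \sum_{l \in \B Z} V_l^* T V_l$, with $V_l := \frac{1}{\sqrt{M+1}}\sum_{m=0}^M P_{l+m}$ read as a column of diagonal projections. Concretely, $\B M(\ga_M)(T) = W^*(1 \otimes T)W$ for the isometry $W : \ell^2(\B Z) \to \ell^2(\B Z) \otimes \ell^2(\B Z)$ given by $W e_k = e_k \otimes$-type embedding $\sum_l \ov{\eta_k(l)}\, e_l \otimes e_k$. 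This displays $\B M(\ga_M)$ as a compression of an ampliation, hence completely positive and unital, so its cb-norm is $\|\B M(\ga_M)(1)\| = \sup_j |\ga_M(j,j)| = 1$. Either route gives boundedness on all of $\B B(\ell^2(\B Z))$, and the cb-norm bound follows.

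The only genuine step is identifying the right factorization, namely the window vectors, and checking the overlap count. After that, the bound is immediate from Grothendieck's theorem, so I expect no real obstacle.
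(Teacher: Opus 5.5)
The paper gives no argument of its own for this lemma; it imports the statement from \cite[Lemma 5.4.4]{JKDK_QSU2}. Your first route is a correct, self-contained proof of it. The window vectors $\xi_j = (M+1)^{-1/2}\sum_{l=j}^{j+M} e_l$ are unit vectors, and your overlap count correctly gives $\inn{\xi_j,\xi_k} = \ga_M(j,k)$. The easy direction of Grothendieck's theorem, $\B M(\ga_M)(T) = V^*(T \otimes 1)V$ with the isometry $V e_j = e_j \otimes \xi_j$, then shows at once that every bounded $T$ lies in the domain and that $\|\B M(\ga_M)\|_{cb} \leq 1$.

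Taking $\xi_j = \eta_j$ has a side benefit: the same formula exhibits $\B M(\ga_M)$ directly as a unital completely positive map. The paper needs exactly this later, and obtains it indirectly from unitality plus $\|\B M(\ga_M)\|_{cb}\leq 1$ via \cite[Proposition 2.11]{paulsen}.

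Your ``alternative'' paragraph is not really a second route. The Kraus form $\sum_l V_l^* T V_l$, with $V_l$ equal to $(M+1)^{-1/2}$ times the projection onto $\T{span}\{e_l,\ldots,e_{l+M}\}$, and the isometry $W e_k = \xi_k \otimes e_k$ are the same factorization written differently. As written, that paragraph also contains an unfinished display with $\cdots$ and a garbled definition of $W$. Either clean it up or drop it, since the first argument already suffices.
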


Remark that $\B M(\ga_M)$ is unital since $\ga_M(j,j) = 1$ for all $j \in \B Z$. In the next lemma, the image of $\B M(\ga_M)$ is computed.

\begin{lemma}\label{l:imagemodule}
  The bounded Schur multiplier $\B M(\ga_M)$ is bilinear over $\ell^\infty(\B Z)$ and the image is given by
  \begin{equation}\label{eq:image}
  \B M(\ga_M)\big( \B B(\ell^2(\B Z)) \big)
  = \T{span}_{\B C} \big\{ U^n f \mid |n| \leq M \, \, , \, \, \, f \in \ell^\infty(\B Z) \big\} .
  \end{equation}
  In particular, the image of $\B M(\ga_M)$ is isomorphic as a right module (and as a left module) to the free module $\ell^\infty(\B Z)^{\op (2M+1)}$.
\end{lemma}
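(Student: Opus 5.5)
Bilinearity over $\ell^\infty(\B Z)$ is the general fact recorded after Definition \ref{d:unbschur}: for diagonal $f,g$ we have $(f T g)_{jk} = f(j) T_{jk} g(k)$, so multiplying entrywise by $\ga_M(j,k)$ commutes with the left and right diagonal actions. The image is then computed by decomposing an operator into diagonals, and the module statement follows from the structure of that decomposition.

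For the inclusion "$\supseteq$" in \eqref{eq:image}, take $|n| \leq M$ and $f \in \ell^\infty(\B Z)$. The matrix of $U^n f$ is supported on the $n$-th diagonal $\{j - k = n\}$, where $\ga_M$ equals the nonzero constant $c_n := \frac{M+1-|n|}{M+1}$. Hence $\B M(\ga_M)(c_n^{-1} U^n f) = U^n f$, so $U^n f$ lies in the image; since the image is a linear subspace, the whole span does.

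For "$\subseteq$", let $T \in \B B(\ell^2(\B Z))$. For each $n \in \B Z$ define the diagonal function $f_n(k) := T_{n+k,k}$. It is bounded by $\|T\|_\infty$, so $f_n \in \ell^\infty(\B Z)$. The operator $U^n f_n$ has matrix entries $T_{n+k,k}$ on the $n$-th diagonal and zero elsewhere. Because $\ga_M$ vanishes off the diagonals with $|j-k| \leq M$, the matrix of $\B M(\ga_M)(T)$ is exactly the finite sum
\[
\sum_{|n| \leq M} c_n \cd U^n f_n ,
\]
which is a bounded operator lying in the right-hand side. Matrix entries determine bounded operators, so the two agree.

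For the module statement, consider the right module map $\ell^\infty(\B Z)^{\op(2M+1)} \to \B M(\ga_M)(\B B(\ell^2(\B Z)))$ given by $(f_n)_{|n|\leq M} \mapsto \sum_n U^n f_n$. It is surjective by \eqref{eq:image}. It is injective because distinct $n$ give disjoint diagonal supports, so each $f_n$ is read off as $f_n(k) = \inn{e_{n+k}, (\cdot)\, e_k}$. For the left module structure, use $f U^n = U^n \al_{-n}(f)$, where $(\al_{-n} f)(k) = f(k+n)$, to rewrite the image as $\T{span}\{ f U^n \}$, and argue the same way with $(f_n) \mapsto \sum_n f_n U^n$.

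There is no real obstacle here. The only point needing care is confirming that the finite sum above really reproduces the full matrix of $\B M(\ga_M)(T)$, which holds precisely because $\ga_M$ has finite band support.
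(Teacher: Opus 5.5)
Your proof is correct and follows the paper's route. Bilinearity comes from the general Schur multiplier discussion, $\B M(\ga_M)$ acts by the scalar $\frac{M+1-|n|}{M+1}$ on the $n$-th diagonal, and the module isomorphisms are $(f_n) \mapsto \sum_n U^n f_n$ and $(f_n) \mapsto \sum_n f_n U^n$. Your entrywise decomposition $f_n(k) = T_{n+k,k}$ goes further than the paper, which only says the observations on $\B M(\ga_M)(U^n)$ ``imply the identity'': it spells out the inclusion $\subseteq$ and the injectivity of these maps, both of which the paper leaves implicit.
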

\begin{proof}
  The bilinearity of $\B M(\ga_M)$ follows since $\B M(\ga_M)$ is a Schur multiplier, see the discussion after Definition \ref{d:unbschur}.
For $n \in \B Z$, observe that $\B M(\ga_M)(U^n) = 0$ for $|n| > M$ and $\B M(\ga_M)(U^n) = \frac{M+1 - |n|}{M+1} \cd U^n$ for $|n| \leq M$. These observations imply the identity in \eqref{eq:image}. The isomorphism between the free module and the image of $\B M(\ga_M)$ is provided by the right module map $\{ f_n \}_{n = -M}^M \mapsto \sum_{n = -M}^M U^n f_n$. For the case of left modules over $\ell^\infty(\B Z)$ the relevant isomorphism is given by $\{ f_n \}_{n = -M}^M \mapsto \sum_{n = -M}^M f_n U^n$ instead.  
\end{proof}

The argument provided in Lemma \ref{l:imagemodule} also allows us to compute the image of the restriction of $\B M(\ga_M)$ to $C(S^1)$:

\begin{lemma}\label{l:findim}
  We have the identity
  \begin{equation}\label{eq:imagecircle}
  \B M(\ga_M)\big( C(S^1) \big)
  = \T{span}_{\B C} \big\{ U^n \mid |n| \leq M \big\} .
  \end{equation}
  In particular, the image $\B M(\ga_M)\big( C(S^1) \big)$ is a finite dimensional subspace of $\C O(S^1)$.
\end{lemma}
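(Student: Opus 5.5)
The plan is to prove the two inclusions in \eqref{eq:imagecircle} separately. For ``$\supseteq$'', we use the computation from the proof of Lemma \ref{l:imagemodule}: for $|n| \leq M$ we have $\B M(\ga_M)(U^n) = \frac{M+1-|n|}{M+1} \cd U^n$, and the scalar factor is nonzero. Hence $U^n = \frac{M+1}{M+1-|n|} \cd \B M(\ga_M)(U^n)$ lies in the image of $C(S^1)$ under $\B M(\ga_M)$. Linearity then gives the whole span.

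For ``$\subseteq$'', fix $f \in C(S^1)$ and record its matrix entries. Since $f$ commutes with $U$, its matrix is constant along diagonals, so $f_{jk} = \hat f(j-k)$, where $\hat f(n) = \inn{e_n, f e_0}$ is the $n$-th Fourier coefficient of $f$. Multiplying by $\ga_M$ keeps this Toeplitz structure and kills every diagonal with $|j-k| > M$. The resulting matrix therefore agrees entrywise with the finite sum
\[
\sum_{|n| \leq M} \frac{M+1-|n|}{M+1} \cd \hat f(n) \cd U^n ,
\]
which is a bounded operator.

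A bounded operator is determined by its matrix entries, so $\B M(\ga_M)(f)$ equals this Fejér-type sum. In particular it lies in $\T{span}_{\B C}\{U^n \mid |n| \leq M\}$, which proves the identity. The final assertion follows at once: this span has dimension at most $2M+1$ and is contained in $\C O(S^1)$.

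The only point needing care is the diagonal-constancy $f_{jk} = \hat f(j-k)$ for a general continuous $f$, not just a polynomial. I would handle it by approximating $f$ in norm by trigonometric polynomials and using that each matrix entry is norm-continuous. Alternatively, $f$ commutes with $U$ since $C^*(U)$ is commutative, which gives $\inn{e_{j+1}, f e_{k+1}} = \inn{U e_j, f U e_k} = \inn{e_j, f e_k}$ directly.
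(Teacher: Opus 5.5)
Your proof is correct and follows essentially the paper's route. The paper only points back to the computation from Lemma \ref{l:imagemodule}, namely $\B M(\ga_M)(U^n) = \frac{M+1-|n|}{M+1} \cd U^n$ for $|n| \leq M$ and $\B M(\ga_M)(U^n) = 0$ for $|n| > M$, and your diagonal-constancy argument makes explicit the passage from $\C O(S^1)$ to all of $C(S^1)$ that the paper leaves implicit. An equivalent way to fill that step is norm-continuity of $\B M(\ga_M)$, density of $\C O(S^1)$ and closedness of finite dimensional subspaces; your version has the small bonus of identifying $\B M(\ga_M)(f)$ as the Fej\'er mean of $f$.
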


Let us introduce the strictly positive number 
\[
  \epsilon_M :=\sqrt{2}\cdot \Big(\frac{M}{(M+1)^2}+\sum_{k=M+1}^\infty\frac{1}{k^2}\Big)^{\frac{1}{2}}
  \]
  and record that $\lim_{M \to \infty} \epsilon_M = 0$. 

The following proposition is a modification of \cite[Proposition 5.5.3]{JKDK_QSU2}. Comparing with \cite[Proposition 5.5.3]{JKDK_QSU2} we are systematically using $q$-integers instead of $q$-half-integers and this means that our estimates are slightly simpler. We remind the reader that $| [n]_q| \geq |n|$ for all $n \in \B Z$.

\begin{proposition}\label{cqms:prop:eapprox}
  Let \(T \in\Lip_q\big(\ell^2(\B Z)\big)\). It holds that
  \[
  \big\| T- \B M(\ga_M)(T) \big\|_\infty \le \epsilon_M \cdot L_q(T)
  \q \mbox{for all } M \in \B N_0 \, \, \mbox{ and } \, \, \, q \in (0,1] .
  \]
\end{proposition}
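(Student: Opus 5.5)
The plan is to write $T - \B M(\ga_M)(T)$ as a Schur multiplier applied to $\dif_q(T)$ and then bound that multiplier's norm by $\epsilon_M$ using a Hilbert-space factorization (Grothendieck). First, by Proposition \ref{p:fundamental},
\[
T = E(T) + \int_q \dif_q(T), \qquad \B M(\ga_M)(T) = E(T) + \B M(\ga_M)\Big(\int_q \dif_q(T)\Big).
\]
The second identity uses $\ga_M(j,j)=1$, so $\B M(\ga_M)\circ E = E$. Hence
\[
T - \B M(\ga_M)(T) = \B M(\theta_M)\big(\dif_q(T)\big), \qquad \theta_M(j,k) := (1-\ga_M(j,k))\,\psi_q(j,k).
\]
Explicitly, $\theta_M(j,k) = -i\,\frac{\min\{|j-k|,M+1\}}{(M+1)[j-k]_q}$ for $j\neq k$, and $\theta_M(j,j)=0$. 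This is a Toeplitz symbol $\theta_M(j,k)=\phi(j-k)$, so it suffices to show $\|\B M(\theta_M)\|_\infty \le \epsilon_M$.

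To bound the multiplier, I would produce a factorization $\phi(j-k) = \inn{\xi_j,\eta_k}$ with vectors in a Hilbert space satisfying $\sup_j\|\xi_j\|\cdot\sup_k\|\eta_k\| \le \epsilon_M$. By Grothendieck's characterization (\cite[Theorem 5.1]{Pis:SPC}), this gives $\|\B M(\theta_M)\|_{cb}\le\epsilon_M$. The natural choice, imitating \cite[Lemma 5.4.3, Proposition 5.5.3]{JKDK_QSU2}, is to split by sign: write $\theta_M = \theta_M^+ + \theta_M^-$, supported on $j>k$ and $j<k$ respectively. For $j>k$, set $n=j-k\ge1$ and $c_n := \frac{\min\{n,M+1\}}{(M+1)[n]_q}$. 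Then $|c_n|\le \frac{1}{M+1}$ for $n\le M$ and $|c_n|\le \frac1n$ for $n\ge M+1$, using $[n]_q\ge n$.

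The key step is a triangular-truncation-type factorization of the kernel $c_{j-k}1_{j>k}$ into $\ell^2(\B Z)$-valued vectors. I would take $\xi_j=\sum_{l} a_{j-l} e_l$ and $\eta_k = \sum_l b_{l-k}e_l$, with $a,b$ supported on $\B N_0$ and $\B N$ respectively, so that $\inn{\xi_j,\eta_k}=\sum_m a_m b_{n-m}$. Then $\|\xi_j\|=\|a\|_2$ and $\|\eta_k\|=\|b\|_2$, and we need $a * b = c$ on $\B N$. The convenient choice is to let $a$ be a delta and $b=c$, which gives the bound $\|c\|_2$. Here
\[
\|c\|_2^2 \le \sum_{n=1}^M\frac{1}{(M+1)^2} + \sum_{n\ge M+1}\frac{1}{n^2} = \frac{M}{(M+1)^2}+\sum_{k=M+1}^\infty\frac1{k^2}.
\]
This is exactly $\epsilon_M^2/2$, which indicates this estimate is the intended one. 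The catch is that a delta for $a$ does not kill the terms with $j\le k$: the factorization must produce $0$ when $n\le0$, and $b$ supported on $\B N$ only handles $n\ge1$ if $a$ is supported at $0$. Checking this, $\inn{\xi_j,\eta_k}=b_{j-k}$, which vanishes for $j\le k$. So $\|\B M(\theta_M^+)\|\le \|c\|_2$, and the same argument applies to $\theta_M^-$. The triangle inequality alone would give the factor $2$.

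The main obstacle is getting $\sqrt2$ rather than $2$. I would combine the two halves into a single factorization in $\ell^2(\B Z)\oplus\ell^2(\B Z)$. Take $\xi_j=(e_j,\,\sum_l \ov{c^-_{l-j}}e_l)$ and $\eta_k=(\sum_l c^+_{l-k}e_l,\, e_k)$, where $c^\pm$ are the symbol's values on the positive and negative diagonals, with signs and conjugations arranged appropriately. Then $\|\xi_j\|^2 = 1+\|c\|_2^2$, which is not the right form. So I would instead rescale by $t$ and $1/t$ in each component and optimize over $t$, or split $c=\sqrt{c}\cdot\sqrt{c}$ symmetrically as in \cite{JKDK_QSU2}. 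I expect tuning this balancing so the product of sup-norms is exactly $\sqrt{2}\,\|c\|_2$ to be the fiddly step. Failing that, I would directly follow the structure of \cite[Proposition 5.5.3]{JKDK_QSU2}, replacing $q$-half-integers by $q$-integers.
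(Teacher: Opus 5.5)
Your reduction is correct: since $\ga_M(j,j)=1$ and $\psi_q\cd\de_q=1$ off the diagonal, $T-\B M(\ga_M)(T)=\B M(\theta_M)\big(\dif_q(T)\big)$. Here $\theta_M(j,k)=\phi(j-k)$ with $\phi(n)=-i\min\{|n|,M+1\}/\big((M+1)[n]_q\big)$ for $n\neq 0$ and $\phi(0)=0$, and your bounds on $c_n$ are right.

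\textbf{The gap.} It is the final constant. Your argument actually establishes $\|\B M(\theta_M^{\pm})\|_{cb}\leq\|c\|_2$, hence only
\[
\|\B M(\theta_M)\|_{cb}\leq 2\|c\|_2=\sqrt{2}\cd\epsilon_M ,
\]
which is weaker than the statement by a factor $\sqrt{2}$. For the later applications any constants tending to $0$ would do, but the proposition asserts $\epsilon_M$. Your proposed repair cannot close this. In the $\ell^2(\B Z)\op\ell^2(\B Z)$ factorization with weights $t$ and $t^{-1}$ you get $\|\xi_j\|\cd\|\eta_k\|=t^2+t^{-2}\|c\|_2^2\geq 2\|c\|_2$ for every $t$. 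Arbitrary weights $\al,\be$ on the two components likewise give a product at least $2\|c\|_2$, so this never beats the triangle inequality. The suggestions ``split $c=\sqrt{c}\cd\sqrt{c}$'' and ``follow the reference'' are not arguments.

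\textbf{The fix.} The sign split is unnecessary. In your own factorization with $a$ a delta at $0$, nothing forces $b$ to be supported on $\B N$. You imposed that only because you had split the symbol, and the diagonal is already handled by $\phi(0)=0$. So take $\xi_j:=e_j$ and $\eta_k:=\sum_{l\in\B Z}\phi(l-k)\,e_l$. Then $\inn{\xi_j,\eta_k}=\phi(j-k)=\theta_M(j,k)$ and $\|\xi_j\|=1$. Since $[-n]_q=-[n]_q$ and $c_n>0$,
\[
\|\eta_k\|^2=\sum_{n\neq 0}|\phi(n)|^2=2\sum_{n=1}^\infty c_n^2\leq 2\Big(\frac{M}{(M+1)^2}+\sum_{k=M+1}^\infty\frac{1}{k^2}\Big)=\epsilon_M^2 .
\]
Grothendieck's theorem then gives $\|\B M(\theta_M)\|_{cb}\leq\epsilon_M$, and the proposition follows.

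The point is that the two one-sided pieces sit on disjoint coordinates of the same vector $\eta_k$, so their norms add in quadrature rather than linearly. Your own observation that $\|c\|_2^2=\epsilon_M^2/2$ already signals this. As a consistency check, the case $M=0$ has $\B M(\ga_0)=E$ and $\epsilon_0=\pi/\sqrt{3}=\big(\sum_{n\neq 0}n^{-2}\big)^{1/2}$. It recovers exactly Proposition~\ref{p:conditional}, whose constant in Lemma~\ref{l:intesti} is the same $\ell^2(\B Z)$-norm bound for the full symbol $\psi_q$.
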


We are now ready to prove the first main theorem of this paper.

\begin{theorem}\label{cqms:thm:cqms}
  Let \(q\in (0,1]\). The pair \((\Lip_q(S^1),L_q)\) is a quantum metric on $C(S^1)$.
\end{theorem}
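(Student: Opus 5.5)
We verify condition (2) of Theorem \ref{cqms:thm:approx} for the slip-norm $L_q : \Lip_q(S^1) \to [0,\infty)$. The finite diameter part is already settled by Proposition \ref{cqms:prop:fdfoxonly}, which gives $\| f \|_{\Lip_q(S^1)/\B C} \leq \frac{\pi}{\sqrt{3}} L_q(f)$. It therefore remains to produce, for a fixed constant $C$ (we will take $C = 1$) and every $\epsilon > 0$, an $(\epsilon,1)$-approximation of $(\Lip_q(S^1),L_q)$.

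Given $\epsilon > 0$, we use $\lim_{M\to\infty}\epsilon_M = 0$ to choose $M \in \B N_0$ with $\epsilon_M \leq \epsilon$. As target operator system we take $\C Y := C(S^1)$. We let $\iota : \Lip_q(S^1) \to C(S^1)$ be the inclusion and $\Phi : \Lip_q(S^1) \to C(S^1)$ be the restriction of $\B M(\ga_M)$. The following points then need to be checked.
\begin{enumerate}
\item Both maps are unital and bounded. For $\iota$ this is clear. For $\Phi$, unitality holds because $\ga_M(j,j) = 1$, and boundedness follows from Lemma \ref{l:schurapp}, which gives $\|\Phi\| \leq 1$.
\item $\Phi$ takes values in $C(S^1)$ and has finite dimensional image. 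By Lemma \ref{l:findim}, $\B M(\ga_M)(C(S^1)) = \T{span}_{\B C}\{U^n \mid |n| \leq M\}$, which settles both points.
\item Condition (1) of Definition \ref{cqms:def:approx} holds trivially with $C = 1$, since $\iota$ is isometric.
\item Condition (3) is exactly Proposition \ref{cqms:prop:eapprox} applied to $T = f \in \Lip_q(S^1) \su \Lip_q(\ell^2(\B Z))$: it yields $\|f - \B M(\ga_M)(f)\|_\infty \leq \epsilon_M L_q(f) \leq \epsilon L_q(f)$.
\end{enumerate}
Theorem \ref{cqms:thm:approx} then shows that $(\Lip_q(S^1),L_q)$ is a compact quantum metric space. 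Since $\C O(S^1) \su \Lip_q(S^1)$ by Lemma \ref{l:extension}, the operator system $\Lip_q(S^1)$ is norm-dense in $C(S^1)$, so its closure is $C(S^1)$ and $(\Lip_q(S^1),L_q)$ is a quantum metric on $C(S^1)$.

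There is no serious obstacle, since the substantive estimates are already packaged in Propositions \ref{cqms:prop:fdfoxonly} and \ref{cqms:prop:eapprox}. The only points that need attention are that $\Phi$ lands in an operator system containing the image of $\iota$ (handled by the choice $\C Y = C(S^1)$ together with Lemma \ref{l:findim}), and that the constant $C$ is independent of $\epsilon$ (it is, since $C = 1$).
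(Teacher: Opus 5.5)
Your proposal is correct and follows the paper's proof: you verify condition (2) of Theorem~\ref{cqms:thm:approx} using Proposition~\ref{cqms:prop:fdfoxonly} for finite diameter, and you build an $(\epsilon,1)$-approximation from the identity/inclusion together with $\B M(\ga_M)$, justified by Proposition~\ref{cqms:prop:eapprox} and Lemma~\ref{l:findim}. The only differences are cosmetic. You take $C(S^1)$ as the target operator system, whereas the paper uses $\Lip_q(S^1)$, which also works since the image lies in $\C O(S^1)$. You also spell out the density of $\Lip_q(S^1)$ in $C(S^1)$, which the paper leaves implicit.
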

\begin{proof}
  We aim to use Theorem \ref{cqms:thm:approx} to prove the claim of the theorem. Specifically, we use the implication \((2)\Rightarrow (1)\). From Proposition \ref{cqms:prop:fdfoxonly} we have that the pair \((\Lip_q(S^1),L_q)\) has finite diameter. We show that there for all \(\epsilon>0\) exists an \((\epsilon,1)\)-approximation of \((\Lip_q(S^1),L_q)\).

  Let \(\epsilon>0\) be given. We let the identity map \(1:\Lip_q(S^1)\to \Lip_q(S^1)\) act as our \(\iota\) from Definition \ref{cqms:def:approx}. That this map satisfies condition $(1)$ of the definition is then obvious, since the identity map is an isometry.

   Since \(\lim_{M\to\infty}\epsilon_M = 0\), we can choose \(M_0\in\N_0\) such that \(\epsilon_M <\epsilon\) for all \(M\ge M_0\). We take the bounded Schur multiplier \(\B M(\ga_{M_0})\) to be our \(\Phi\) corresponding to Theorem \ref{cqms:def:approx}. As a consequence of Proposition \ref{cqms:prop:eapprox}, this choice of \(\iota\) and \(\Phi\) satisfies condition $(3)$ of Definition \ref{cqms:def:approx}. The remaining condition $(2)$ follows immediately from Lemma \ref{l:findim}.
\end{proof}

\subsection{Non-existence of certain metrics on the circle}
The final aim of this section is to show that, for $q \in (0,1)$, the quantum metric \((\Lip_q(S^1),L_q)\) on $C(S^1)$ found in Theorem \ref{cqms:thm:cqms} does not come from a metric on the circle $S^1$. To explain what is meant by this statement let us consider the general case where $M$ is a compact metric space with metric $\rho$. We let $C(M)$ denote the unital $C^*$-algebra of continuous complex-valued functions on $M$ equipped with the supremum norm $\| \cd \|_\infty$. The norm-dense unital $*$-subalgebra of Lipschitz functions on $M$ is denoted by $\Lip_\rho(M) \su C(M)$ and we equip $\Lip_\rho(M)$ with the seminorm $L_\rho$ which associates the Lipschitz constant to a Lipschitz function $f : M \to \B C$. The prototypical example of a compact quantum metric space is given by the pair $\big( \Lip_\rho(M), L_\rho\big)$ and in this setting, the following \emph{Leibniz inequality} holds:
\[
L_\rho(fg)\le \norm{f}_\infty L_\rho(g)+L_\rho(f)\norm{g}_\infty \q \T{for all } f,g \in \Lip_\rho(M) . 
\]

\begin{definition}
Let $(\C X,L)$ be a quantum metric on $C(M)$. We say that $(\C X,L)$ \emph{comes from the metric} $\rho$, if $\C X$ is an operator subsystem of $\Lip_\rho(M)$ and $L(f) = L_\rho(f)$ for all $f \in \C X$.  
\end{definition}

We may now restate the content of Proposition \ref{p:oneLipschitz} and part of Theorem \ref{cqms:thm:cqms} in the following way: 

\begin{proposition}
The pair $(\Lip_1(S^1),L_1)$ is a quantum metric on $C(S^1)$ and this quantum metric comes from the standard arc length metric on $S^1$. It moreover holds that $\Lip_1(S^1)$ agrees with the Lipschitz functions coming the arc length metric.
\end{proposition}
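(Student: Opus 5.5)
This proposition only assembles results already established in the paper, so the plan is to combine them. There are three claims: $(\Lip_1(S^1),L_1)$ is a quantum metric on $C(S^1)$; it comes from the arc length metric $\rho$; and $\Lip_1(S^1)$ coincides with $\Lip_\rho(S^1)$.

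For the first claim, I would specialize Theorem \ref{cqms:thm:cqms} to $q=1$. That theorem is stated for all $q \in (0,1]$. Its proof uses three inputs: the finite diameter bound of Proposition \ref{cqms:prop:fdfoxonly}, the approximation estimate of Proposition \ref{cqms:prop:eapprox}, and the finite-dimensionality of $\B M(\ga_M)(C(S^1))$ from Lemma \ref{l:findim}. All three are valid at $q=1$, since $[n]_1 = n$ and in particular $|[n]_1| \geq |n|$. Hence nothing new has to be proved here.

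For the remaining two claims, I would invoke Proposition \ref{p:oneLipschitz}. It states that $\Lip_1(S^1)$ equals the set of Lipschitz functions for the arc length metric, which is the third claim. It also states that for $f \in \Lip_1(S^1)$ the quantity $\|\dif_1(f)\|_\infty$ equals the Lipschitz constant of $f$, that is, $L_1(f) = L_\rho(f)$.

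To conclude that the quantum metric comes from $\rho$ in the sense of the definition, I need two facts. First, $\Lip_1(S^1)$ is an operator subsystem of $\Lip_\rho(S^1)$; this holds trivially because the two spaces coincide. Second, $L_1$ agrees with $L_\rho$ on $\Lip_1(S^1)$, which is exactly the identity just obtained from Proposition \ref{p:oneLipschitz}.

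I do not expect a genuine obstacle. The only real content is the identification in Proposition \ref{p:oneLipschitz}, which rests on Christensen's result \cite[Theorem 3.8]{Chr:WDO}, and the paper has already proved it. The one check worth making is that Proposition \ref{p:oneLipschitz} is formulated with complex-valued functions $f : S^1 \to \B C$, so that it matches the complex-valued setting $C(M)$ used in the definition of a quantum metric coming from a metric.
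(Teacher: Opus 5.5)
Your proposal is correct and matches the paper's approach. The paper states this proposition without a separate proof, as a restatement of Proposition \ref{p:oneLipschitz} combined with Theorem \ref{cqms:thm:cqms} specialized to $q=1$, which is exactly the combination you give.
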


The situation is different for $q \neq 1$ as clarified in the next lemma. 


\begin{lemma}\label{l:nonleibniz}
  Let \(q\in (0,1)\). It is impossible to find a constant $C > 0$ such that
  \begin{equation}\label{eq:leib}
L_q(f g) \leq C \cd \big( L_q(f) \| g \|_\infty + \| f \|_\infty L_q(g) \big) \q \mbox{for all } f,g \in \C O(S^1) .
  \end{equation}
\end{lemma}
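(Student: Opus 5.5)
We argue by contradiction, testing the inequality on monomials. For $n \in \B Z$ the function $z^n$ corresponds to $U^n$, and by Lemma \ref{l:extension} we have $\dif_q(U^n) = i[n]_q U^n$. Since $U^n$ is unitary, $L_q(z^n) = |[n]_q|$ and $\|z^n\|_\infty = 1$.

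Suppose a constant $C>0$ as in \eqref{eq:leib} exists. Take $f = z^n$ and $g = z^m$ with $n,m \in \B N$, so that $fg = z^{n+m}$. The inequality then reads
\[
[n+m]_q \leq C \cd \big( [n]_q + [m]_q \big) \q \mbox{for all } n,m \in \B N .
\]
The simplest choice is $m = n$, which gives $[2n]_q \leq 2C [n]_q$.

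It remains to see that $[2n]_q/[n]_q$ is unbounded when $q \in (0,1)$. From the definition,
\[
[2n]_q = \frac{q^{2n}-q^{-2n}}{q-q^{-1}} = [n]_q \cd (q^n + q^{-n}) ,
\]
so $[2n]_q/[n]_q = q^n + q^{-n}$. This tends to infinity as $n \to \infty$, because $q^{-1} > 1$. This contradicts $[2n]_q \leq 2C[n]_q$ for all $n$, and hence no such $C$ exists.

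The only step that needs any care is the factorization of $[2n]_q$. It follows directly from $q^{2n} - q^{-2n} = (q^n - q^{-n})(q^n + q^{-n})$. Everything else is immediate from Lemma \ref{l:extension} and the fact that unitaries have norm one.
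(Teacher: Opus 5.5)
Your proof is correct and follows essentially the same route as the paper: both test the inequality on $f = g = z^n$ to get $[2n]_q \leq 2C \cd [n]_q$ and then show this fails as $n \to \infty$. The only difference is cosmetic: you use the factorization $[2n]_q = [n]_q \cd (q^n + q^{-n})$, whereas the paper multiplies through by $q^n(q^{-1}-q)$ to reach $q^{-n} - q^{3n} \leq 2C(1-q^{2n})$.
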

\begin{proof}
  Suppose for contradiction that \eqref{eq:leib} is true for some \(C>0\). In particular, we have that 
  \[
   [2n]_q = L_q(z^{2n})\leq 2C\cdot L_q(z^n) = 2C \cd [n]_q \q \T{for all } n \in \B N .
   \]
   This entails that
   \[
   q^{-n} - q^{3n} =  q^n(q^{-1} - q) \cd [2n]_q \leq 2C \cd q^n(q^{-1} - q)[n]_q = 2C \cd (1 - q^{2n}) \q \T{for all } n \in \B N .
   \]
   But this is a contradiction since the left hand side diverges to infinity and the right hand side converges to $2C$ as $n$ approaches infinity.
\end{proof}

A combination of Lemma \ref{l:nonleibniz} and the fact that the Leibniz inequality is satisfied by Lipschitz functions yield our result regarding non-existence of metrics. 

\begin{proposition}
  Let \(q\in(0,1)\). If $\C X \su \Lip_q(S^1)$ is an operator subsystem with $\C O(S^1) \su \C X$, then the quantum metric $(\C X,L_q)$ on $C(S^1)$ does not come from a metric on $S^1$. 
\end{proposition}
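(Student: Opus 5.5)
The argument is by contradiction, combining Lemma \ref{l:nonleibniz} with the Leibniz inequality for Lipschitz constants. Suppose there is a metric $\rho$ on $S^1$ such that $(\C X, L_q)$ comes from $\rho$. By definition, $\C X$ is then an operator subsystem of $\Lip_\rho(S^1)$ and $L_q(f) = L_\rho(f)$ for every $f \in \C X$. In particular, since $\C O(S^1) \su \C X$, every trigonometric polynomial is $\rho$-Lipschitz and its $\rho$-Lipschitz constant equals $L_q(f)$.

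Next I will check that the Leibniz inequality holds for $L_\rho$ on all of $\Lip_\rho(S^1)$; we only need it for $f,g \in \C O(S^1)$. This is the standard computation
\[
|f(x)g(x) - f(y)g(y)| \leq |f(x)| \cd |g(x) - g(y)| + |f(x) - f(y)| \cd |g(y)| ,
\]
which yields $L_\rho(fg) \leq \|f\|_\infty L_\rho(g) + L_\rho(f) \|g\|_\infty$. The point to note is that $fg$ lies in $\C O(S^1) \su \C X$ whenever $f,g \in \C O(S^1)$, because $\C O(S^1)$ is an algebra. Hence $L_q(fg) = L_\rho(fg)$, and we obtain
\[
L_q(fg) \leq \|f\|_\infty L_q(g) + L_q(f) \|g\|_\infty \q \T{for all } f,g \in \C O(S^1) .
\]
This is \eqref{eq:leib} with $C = 1$, which contradicts Lemma \ref{l:nonleibniz}.

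There is no real obstacle in this argument. The only subtlety is that $\C X$ itself need not be closed under multiplication. This is why the Leibniz inequality is applied only to products of elements of $\C O(S^1)$, which remain in $\C X$, so that both sides can be expressed through $L_q$.
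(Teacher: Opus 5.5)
Your proposal is correct and follows essentially the same route as the paper: the paper also argues by combining Lemma \ref{l:nonleibniz} with the Leibniz inequality for Lipschitz constants, which gives \eqref{eq:leib} with $C=1$ on $\C O(S^1)$. Your remark that $\C O(S^1)$ is closed under products, so that $L_q(fg)=L_\rho(fg)$, is exactly the detail the paper's one-line justification leaves implicit.
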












\section{Continuity of the twist}
The result of Theorem \ref{cqms:thm:cqms} says that to each $q \in (0,1]$ we may assign the compact quantum metric space \((\Lip_q(S^1),L_q)\). In this section, we will show that this assignment is continuous, in the sense that \(\lim_{q\to q_0}\compdist\big( (\Lip_q(S^1),L_q),(\Lip_{q_0}(S^1),L_{q_0})\big)=0\) for all \(q_0\in(0,1]\), where \(\compdist\) refers to Kerr's complete Gromov-Hausdorff distance, see \cite{Ker:MQG}. In this endeavour, we employ a result due to Leimbach, see \cite[Proposition 5.19]{lb25} and the related results in \cite[Proposition 2.2.4]{JKDK_QSU2} and \cite[Theorem 5]{Sui:GHC}. 

\begin{theorem}\label{cont:thm:estimate}
  Suppose that \((\X,L_\X)\) and \((\Y,L_\Y)\) are compact quantum metric spaces and that \(\Phi:\X\to\Y\) and \(\Psi:\Y\to\X\) are ucp-maps such that there exist positive real numbers \(\epsilon_\X,\epsilon_\Y,C_\Phi,C_\Psi>0\) with the following properties:
  \begin{enumerate}
  \item \(L_\Y(\Phi(x))\le C_\Phi\cdot L_\X(x)\) and \(L_\X(\Psi(y))\le C_\Psi\cdot L_\Y(y)\) for all \(x\in\X\) and \(y\in\Y\);
    \item \(\|x-\Psi\Phi(x)\|_\X \le \epsilon_\X \cd L_\X(x)\) and \(\|y-\Phi\Psi(y)\|_\Y \le\epsilon_\Y \cd L_\Y(y)\) for all \(x\in\X\) and \(y\in\Y\).
  \end{enumerate}
  Then it holds that
  \[
  \begin{split}
    \compdist\big((\X,L_\X),(\Y,L_\Y)\big) 
    \le\max \Big\{ & \diam(\X,L_\X)\cdot\abs{1-1/C_\Phi}+\epsilon_\X/C_\Phi, \\
    & \q \diam(\Y,L_\Y)\cdot\abs{1-1/C_\Psi}+\epsilon_\Y/C_\Psi\Big\}.
  \end{split}
  \]
\end{theorem}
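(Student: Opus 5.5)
Since Kerr's complete distance is defined as an infimum over admissible seminorms on the direct sum, the plan is to build one explicit admissible seminorm on $\X \oplus \Y$ from $\Phi$ and $\Psi$ and then estimate the resulting Hausdorff distances between matrix state spaces. Recall that $\compdist$ is the supremum over $n$ of the Hausdorff distances, computed with the Monge--Kantorovich metric of an admissible seminorm $L$ on $\X \oplus \Y$, between the images of the matrix state spaces $UCP_n(\X)$ and $UCP_n(\Y)$ inside $UCP_n(\X \oplus \Y)$. Admissibility means that $L$ is a slip-norm whose quotient seminorms onto $\X$ and $\Y$ agree with $L_\X$ and $L_\Y$. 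After rescaling $\Phi$, I propose to take
\[
L(x,y) := \max\Big\{ L_\X(x), \, L_\Y(y), \, \tfrac{1}{\gamma} \| \Phi(x) - y \|_\Y , \, \tfrac{1}{\gamma'} \| \Psi(y) - x \|_\X \Big\},
\]
with bridge constants $\gamma,\gamma'$ tuned to $\diam$, $\epsilon$ and $C$. A more natural variant, in the spirit of Leimbach and of \cite[Proposition 2.2.4]{JKDK_QSU2}, replaces $\Phi(x)$ by $\Phi(x)/C_\Phi$ plus a correction by a state times $(1-1/C_\Phi)$. This variant accounts for the term $\diam(\X,L_\X)\,|1-1/C_\Phi|$ in the stated bound.

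The first step is admissibility. Given $x$ with $L_\X(x)\le 1$, I will produce $y$ with $L(x,y)\le 1$ by taking $y$ to be a suitable contraction of $\Phi(x)$ towards the scalar $\mu(x)1$ for a fixed state $\mu$, namely
\[
y = \frac{\Phi(x)}{C_\Phi} + \Big(1-\frac{1}{C_\Phi}\Big)\mu(x)1 .
\]
This gives $L_\Y(y) \le 1$ by (1). The bridge term is $\|\Phi(x) - y\|$ in the first formulation, but it is better to use a bridge measuring $\|x - \Psi(y)\|$ or a symmetric combination. The distance $\|x-\mu(x)1\| \le \diam \cdot L_\X(x)$ controls the $|1-1/C_\Phi|$ contribution, and (2) controls the $\epsilon_\X/C_\Phi$ contribution. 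The symmetric argument handles the $\Y$ side. The quotient seminorm is automatically at most $L_\X$ because $L(x,y)\ge L_\X(x)$, so the quotients match exactly.

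The second step is the Hausdorff estimate at each matrix level $n$. For $\phi \in UCP_n(\X)$ I will use $\phi\circ\Psi \in UCP_n(\Y)$, which is ucp since $\Psi$ is. I then bound $\|\phi(x)-\phi\Psi(y)\|$ over $(x,y)$ with $L(x,y)\le 1$, at matrix amplification level, by the bridge term. Matrix-valued states are completely contractive, so the norm estimates pass to all levels. This gives a bound of the form $\max\{\cdot,\cdot\}$ with the two expressions in the statement.

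The main obstacle I expect is choosing the bridge constants so that two requirements hold simultaneously: the quotient of $L$ must be exactly $L_\X$ and $L_\Y$, and the Hausdorff distance must come out as precisely the stated maximum. In particular, the scalar correction with a state $\mu$ needs care for ucp-ness and for the $*$-invariance of $L$. If needed, I will simply follow Leimbach's construction in \cite[Proposition 5.19]{lb25}.
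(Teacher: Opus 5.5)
Your skeleton is the right one; the paper gives no argument of its own and simply cites Leimbach. The skeleton consists of a bridge seminorm on $\X\oplus\Y$ containing $\|\Phi(x)-y\|$ and $\|x-\Psi(y)\|$, witnesses built from $\Phi,\Psi$ with a scalar correction, and the comparisons $\phi\mapsto\phi\circ\Psi$ and $\phi\mapsto\phi\circ\Phi$ at every matrix level. But the proposal stops exactly where the proof happens: you never fix the seminorm or the constants $\gamma,\gamma'$ (you defer this to Leimbach), and the pieces you do commit to do not yet fit together.

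First, the estimate $\|x-\mu(x)1\|\le\diam(\X,L_\X)\cd L_\X(x)$ for a fixed state $\mu$ holds for selfadjoint $x$, but not in general. For non-selfadjoint $x$ the norm can be up to twice the numerical radius. For example, take $M_2(\mathbb{C})$ with the slip-norm $L(y):=\sup_{\phi,\psi\in S(M_2(\mathbb{C}))}|\phi(y)-\psi(y)|$. The diameter is $1$ and $L(e_{12})=1$, yet for the vector state $\mu$ at $(1,1)/\sqrt{2}$ one gets $\|e_{12}-\mu(e_{12})1\|=\|e_{12}-\tfrac12 1\|=\tfrac{1+\sqrt2}{2}>1$. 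Since the quotient condition concerns all of $\X$, this would double the diameter term.

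Second, with your witness $y$ the other bridge term $\|\Phi(x)-y\|=|1-C_\Phi^{-1}|\cd\|\Phi(x-\mu(x)1)\|$ is nonzero. So both bridge terms must be bounded for each witness, and this is precisely what dictates $\gamma,\gamma'$.

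Third, the ``more natural'' variant with $\Phi'=C_\Phi^{-1}\Phi+(1-C_\Phi^{-1})\mu(\cd)1$ inside the bridge need not be positive when $C_\Phi<1$. Moreover, the $\Y$-side witness $\Psi'(y)$ must then be controlled through $\Phi'\Psi'$, which brings in $C_\Phi C_\Psi$ instead of $C_\Psi$ and does not give the stated constant.

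Finally, a direction slip: $L(x,y)\ge L_\X(x)$ shows that the quotient of $L$ is at least $L_\X$. The witness is what gives ``at most''.

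To close the gap, keep $\Phi,\Psi$ unmodified in the bridge and put the scaling only into the witnesses. Let $A_\X:=\diam(\X,L_\X)\,|1-C_\Phi^{-1}|+\epsilon_\X/C_\Phi$, define $A_\Y$ symmetrically, set $D:=\max\{A_\X,A_\Y\}$, and put
\[
L(x,y):=\max\big\{L_\X(x),\,L_\Y(y),\,D^{-1}\|\Phi(x)-y\|,\,D^{-1}\|x-\Psi(y)\|\big\}.
\]
This is a slip-norm since $\Phi,\Psi$ are unital and $*$-preserving.

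Given $x=a+ib$ with $a,b$ selfadjoint, one has $L_\X(a),L_\X(b)\le L_\X(x)$. Take $\lambda=\lambda_a+i\lambda_b$, where $\lambda_a$ is the midpoint of $\{\phi(a)\mid \phi\in S(\X)\}$ and likewise for $\lambda_b$. Then $\|x-\lambda 1\|\le\diam(\X,L_\X)\cd L_\X(x)$. Setting $y:=C_\Phi^{-1}\Phi(x)+(1-C_\Phi^{-1})\lambda 1$, you get $L_\Y(y)\le L_\X(x)$ together with
\[
\|\Phi(x)-y\|\le|1-C_\Phi^{-1}|\cd\|x-\lambda 1\|\le A_\X\cd L_\X(x)
\]
and
\[
\|x-\Psi(y)\|\le C_\Phi^{-1}\|x-\Psi\Phi(x)\|+|1-C_\Phi^{-1}|\cd\|x-\lambda 1\|\le A_\X\cd L_\X(x).
\]
Hence the quotient of $L$ on $\X$ is exactly $L_\X$, and the $\Y$-side is symmetric.

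The pair $(\X\oplus\Y,L)$ is a compact quantum metric space. It has finite diameter, because $\|x-\Psi(y)\|\le D$ ties the scalar parts of $x$ and $y$ together. Direct sums of $(\epsilon,C)$-approximations of $(\X,L_\X)$ and $(\Y,L_\Y)$ are approximations for $L$, so Theorem~\ref{cqms:thm:approx} applies.

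Finally, ucp maps are contractive. So for $\phi\in UCP_n(\X)$ and $L(x,y)\le 1$ we get $\|\phi(x)-\phi\Psi(y)\|\le\|x-\Psi(y)\|\le D$, and symmetrically with $\phi\circ\Phi$. Thus $\compdist\le D$, which is the stated bound.
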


For every \(M\in\N_0\), we define the \emph{spectral band} as the finite dimensional operator subsystem
\[
  A_M:=\T{span}_{\B C}\big\{U^n \mid n\in\{-M,-M+1,\ldots,M\} \big\}\subseteq\O(S^1).
\]
Equipping it with the seminorm obtained by restricting \(L_q\) to \(A_M\), we obtain a compact quantum metric space.

Recall from Lemma \ref{l:schurapp} that the Schur multiplier \(\B M(\ga_M)\) is bounded and unital with $\| \B M(\gamma_M) \|_{cb} \le 1$. It therefore follows by \cite[Proposition 2.11]{paulsen} that $\B M(\ga_M)$ is a ucp-map. Furthermore, by Lemma \ref{l:findim}, the bounded Schur multiplier \(\B M(\ga_M)\) induces a surjective map \(C(S^1)\to A_M\). These observations along with Proposition \ref{cqms:prop:eapprox} and Theorem \ref{cont:thm:estimate} allow us to establish the following:

\begin{proposition}\label{cont:prop:uniform}
  For every \(q\in(0,1]\) and every \(M\in\N_0\), it holds that
  \begin{equation}\label{eq:spectralband}
    \compdist\big((\Lip_q(S^1),L_q),(A_M,L_q)\big) \le \ep_M
  \end{equation}
  and hence that $\compdist\big((\Lip_q(S^1),L_q),(\O(S^1),L_q)\big)=0$.
\end{proposition}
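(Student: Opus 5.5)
We apply Theorem \ref{cont:thm:estimate} with $\X = \Lip_q(S^1)$ with slip-norm $L_q$ and $\Y = A_M$ with the restriction of $L_q$. We take $\Phi := \B M(\ga_M)|_{\Lip_q(S^1)} : \Lip_q(S^1) \to A_M$, which is well defined by Lemma \ref{l:findim}, and we take $\Psi : A_M \to \Lip_q(S^1)$ to be the inclusion. Both maps are ucp. For $\Phi$ this is the restriction of the ucp-map $\B M(\ga_M)$, as explained just before the proposition. For $\Psi$ it is clear, and $\Psi$ lands in $\Lip_q(S^1)$ because $A_M \su \C O(S^1) \su \Lip_q(S^1)$ by Lemma \ref{l:extension}.

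Next we verify condition (1) of Theorem \ref{cont:thm:estimate} with $C_\Phi = C_\Psi = 1$. For $\Psi$ this is trivial, since $L_q$ on $A_M$ is the restriction. For $\Phi$ we use that Schur multipliers commute: $\de_q \cd \ga_M = \ga_M \cd \de_q$ pointwise. So for $f \in \Lip_q(S^1)$ we get $\dif_q\big(\B M(\ga_M)(f)\big) = \B M(\ga_M)\big(\dif_q(f)\big)$. The only point to check is the domain: $\B M(\ga_M)(f)$ lies in $\C O(S^1)$, so it is in the domain of $\dif_q$, and its matrix entries are $\de_q(j,k)\ga_M(j,k) f_{jk}$, which are the entries of $\B M(\ga_M)(\dif_q f)$. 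Lemma \ref{l:schurapp} then gives $L_q(\Phi(f)) \leq \|\B M(\ga_M)\|_{cb}\, \|\dif_q(f)\|_\infty \leq L_q(f)$.

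For condition (2), Proposition \ref{cqms:prop:eapprox} gives $\|f - \Psi\Phi(f)\|_\infty \leq \epsilon_M L_q(f)$, so $\epsilon_\X = \epsilon_M$. On the other side, for $y \in A_M$ the same proposition gives $\|y - \Phi\Psi(y)\|_\infty = \|y - \B M(\ga_M)(y)\|_\infty \leq \epsilon_M L_q(y)$, so $\epsilon_\Y = \epsilon_M$ as well. Since $C_\Phi = C_\Psi = 1$, the diameter terms in Theorem \ref{cont:thm:estimate} vanish, and we obtain \eqref{eq:spectralband}. The diameters are finite by Proposition \ref{cqms:prop:fdfoxonly}, and $(A_M, L_q)$ is a compact quantum metric space as an operator subsystem of $\Lip_q(S^1)$.

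For the second claim we use the triangle inequality for $\compdist$ from \cite{Ker:MQG}:
\[
\compdist\big((\Lip_q(S^1),L_q),(\O(S^1),L_q)\big) \leq \compdist\big((\Lip_q(S^1),L_q),(A_M,L_q)\big) + \compdist\big((A_M,L_q),(\O(S^1),L_q)\big).
\]
Here $(\O(S^1),L_q)$ is a compact quantum metric space as a subsystem of $\Lip_q(S^1)$. The first term is at most $\ep_M$ by \eqref{eq:spectralband}. The same argument with $\X = \O(S^1)$ bounds the second term by $\ep_M$, since $\B M(\ga_M)$ maps $\O(S^1)$ onto $A_M$. Letting $M \to \infty$ gives distance $0$. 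The main obstacle is the commutation/domain check in condition (1), which is elementary here because everything is entrywise.
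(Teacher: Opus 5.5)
Your proposal is correct and follows essentially the same route as the paper. It uses the inclusion $A_M \to \Lip_q(S^1)$ and the restriction of $\B M(\ga_M)$ as the two ucp-maps, gets $C_\Phi = C_\Psi = 1$ from commuting Schur multipliers and $\|\B M(\ga_M)\|_{cb}\leq 1$, takes $\epsilon_M$ from Proposition \ref{cqms:prop:eapprox}, and concludes with Theorem \ref{cont:thm:estimate}. Your triangle-inequality argument for $\O(S^1)$ spells out what the paper leaves terse. Alternatively, you could apply Theorem \ref{cont:thm:estimate} directly with $\Y = \O(S^1)$ and the inclusion $\O(S^1) \to \Lip_q(S^1)$, which gives the bound $\epsilon_M$ without the factor $2$.
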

\begin{proof}
  Let \(M\in\N_0\) and \(q\in(0,1]\). The inclusion of $A_M$ into \(\Lip_q(S^1)\) is denoted by \(\iota: A_M\to\Lip_q(S^1)\) and we let $E_M : \Lip_q(S^1) \to A_M$ denote the ucp-map obtained by restriction of $\B M(\ga_M)$. It clearly holds that $\io$ is a ucp-map which satisfies that $L_q(\io(f)) = L_q(f)$ for all $f \in A_M$. Moreover, we obtain from Proposition \ref{cqms:prop:eapprox} that $\| f-E_M \io(f) \|_\infty  \le \ep_M \cdot L_q(f)$ for all $f \in A_M$ and likewise that $\| T-\io E_M(T) \|_\infty  \le \ep_M \cdot L_q(T)$ for all $T \in \Lip_q(S^1)$.

  Notice next that $\B M(\ga_M)$ and $\dif_q = \B M(\de_q)$ are both Schur multipliers with $\B M(\ga_M)$ being bounded and having image contained in the domain of $\dif_q$, see Lemma \ref{l:imagemodule}. It therefore holds that
  \begin{equation}\label{eq:schurcomm}
\dif_q \B M(\ga_M)(T) = \B M(\ga_M) \dif_q(T) \q \T{for all } T \in \Lip_q\big( \ell^2(\B Z)\big) .
\end{equation}
As a consequence of \eqref{eq:schurcomm} and since $E_M$ is a norm-contraction we get the estimate $L_q(E_M(T)) \leq L_q(T)$ for all $T \in \Lip_q(S^1)$. An application of Theorem \ref{cont:thm:estimate} now yields the inequality in \eqref{eq:spectralband}.

Finally, since $A_M \su \C O(S^1)$ for all $M \in \B N_0$ and $\lim_{M \to \infty}\ep_M = 0$ we may also conlude that
    \[
    \compdist\big((\Lip_q(S^1),L_q),(\C O(S^1),L_q)\big)=0. \qedhere
    \]
\end{proof}

For every $q \in (0,1]$ and every $M \in \B N_0$, it follows from the proof of Lemma \ref{l:extension} that the unbounded Schur multiplier \(\dif_q = \B M(\de_q)\) restricts to a bounded operator \((\dif_q)_M : A_M\to A_M\). We shall now see that these bounded operators vary continuously with respect to the deformation parameter \(q \in (0,1]\):
    
\begin{lemma}\label{cont:lemma:cont}
  Let \(M\in\N_0\). For every \(q_0\in(0,1]\), we have that
  \[
    \lim_{q\to q_0}(\dif_q)_M= (\dif_{q_0})_M ,
  \]
  where the limit is computed with respect to the operator norm on \(\B B( A_M,A_M)\).
\end{lemma}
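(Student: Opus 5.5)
The plan is to use that $A_M$ is finite dimensional with basis $U^{-M},\ldots,U^M$ and that $(\dif_q)_M$ is diagonal in this basis. By the proof of Lemma \ref{l:extension}, $(\dif_q)_M(U^n) = i[n]_q U^n$ for $|n| \leq M$, so the difference of two such operators is again diagonal:
\[
\big( (\dif_q)_M - (\dif_{q_0})_M \big)(U^n) = i\big([n]_q - [n]_{q_0}\big) U^n .
\]
For $f = \sum_{|n|\leq M} c_n U^n \in A_M$ I would bound
\[
\big\| \big((\dif_q)_M - (\dif_{q_0})_M\big)(f) \big\|_\infty \leq \sum_{|n| \leq M} |c_n| \cd \big|[n]_q - [n]_{q_0}\big| \leq \max_{|n|\leq M}\big|[n]_q - [n]_{q_0}\big| \cd \sum_{|n|\leq M}|c_n| .
\]

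It remains to control $\sum |c_n|$ by $\|f\|_\infty$. This is immediate: $c_n = h(z^{-n} f)$, so $|c_n| \leq \|f\|_\infty$ and $\sum_{|n|\leq M}|c_n| \leq (2M+1)\|f\|_\infty$. Equivalently, one can invoke equivalence of norms on the finite dimensional space $A_M$. This gives the operator norm bound
\[
\big\|(\dif_q)_M - (\dif_{q_0})_M\big\| \leq (2M+1)\max_{|n|\leq M}\big|[n]_q - [n]_{q_0}\big| .
\]

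The only remaining point, and the one needing slight care, is continuity of $q \mapsto [n]_q$ on $(0,1]$ for each fixed $n$, in particular at $q_0 = 1$ where the definition switches. For $q_0 \in (0,1)$ continuity is clear from the formula $\frac{q^n-q^{-n}}{q-q^{-1}}$. At $q_0=1$ I would use the finite-sum identity
\[
[n]_q = \sum_{k=0}^{n-1} q^{\,n-1-2k} \q \T{for } n \geq 1 ,
\]
together with $[-n]_q = -[n]_q$ and $[0]_q = 0$. The sum is a Laurent polynomial in $q$ that equals $n$ at $q=1$, so it is continuous on all of $(0,1]$. Since the maximum runs over only finitely many $n$, it tends to $0$ as $q \to q_0$, which proves the lemma.
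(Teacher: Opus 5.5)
Your proof is correct and follows essentially the same route as the paper: expand $f=\sum_{|n|\leq M}\alpha_n U^n$, apply the triangle inequality with $|\alpha_n|\leq\|f\|_\infty$, and use continuity of $q\mapsto[n]_q$ for the finitely many $|n|\leq M$. You also justify two points the paper leaves implicit, namely the coefficient bound via $c_n=h(z^{-n}f)$ and continuity of the $q$-integers at $q_0=1$.
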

\begin{proof}
  Let $q_0 \in (0,1]$ and $\ep > 0$ be given. Choose a $\de > 0$ such that $\big| [n]_q - [n]_{q_0} \big| < \ep/(2M+1)$ for all $q \in (0,1] \cap (q_0 - \de,q_0 + \de)$ and all $n \in \{-M,-M+1,\ldots,M\}$. For every $q \in (0,1] \cap (q_0 - \de,q_0 + \de)$ and every $f = \sum_{n=-M}^M \al_n U^n \in A_M$ with $\| f \|_\infty = 1$ it then holds that
        \[
    \begin{split}
      \big\| \dif_q(f)-\dif_{q_0}(f) \big\|_\infty
      = \big\| \sum_{n=-M}^M([n]_{q}-[n]_{q_0}) \al_n U^n  \big\|_\infty 
      < \sum_{n=-M}^M \frac{\ep}{2M+1} | \al_n | \leq \ep . \qedhere
    \end{split}
    \]
\end{proof}

The continuity result in Lemma \ref{cont:lemma:cont} allows us to prove a continuity result for the spectral bands with respect to Kerr's complete Gromov-Hausdorff distance:

\begin{proposition}\label{cont:prop:specband}
  For every \(M\in\N_0\) and every \(q_0\in(0,1]\), it holds that
  \[
    \lim_{q\to q_0}\compdist\big((A_M,L_q),(A_M,L_{q_0})\big) = 0.
  \]
\end{proposition}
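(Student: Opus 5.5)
We will apply Theorem \ref{cont:thm:estimate} with $\X = \Y = A_M$, $L_\X = L_q$, $L_\Y = L_{q_0}$ and $\Phi = \Psi = \T{id}_{A_M}$. The identity is ucp and satisfies condition (2) with any $\epsilon_\X, \epsilon_\Y > 0$, since $\|x - x\|_\infty = 0$. Hence the estimate reduces to
\[
\compdist\big((A_M,L_q),(A_M,L_{q_0})\big) \le \max\Big\{ \diam(A_M,L_q)\cd |1-1/C_\Phi| + \epsilon_\X/C_\Phi , \, \diam(A_M,L_{q_0})\cd |1-1/C_\Psi| + \epsilon_\Y/C_\Psi \Big\}.
\]
Letting $\epsilon_\X,\epsilon_\Y \to 0$, it suffices to find constants $C_\Phi = C_\Phi(q)$ and $C_\Psi = C_\Psi(q)$ tending to $1$ as $q \to q_0$. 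We also use that the diameters are uniformly bounded by $\frac{2\pi}{\sqrt{3}}$, by Proposition \ref{cqms:prop:fdfoxonly} applied to the subsystem $A_M$.

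The key step is the comparison $L_{q_0}(f) \le C_\Phi L_q(f)$ and $L_q(f) \le C_\Psi L_{q_0}(f)$ for $f \in A_M$. Using Lemma \ref{cont:lemma:cont} we have
\[
|L_q(f) - L_{q_0}(f)| \le \|(\dif_q)_M - (\dif_{q_0})_M\| \cd \|f\|_\infty .
\]
The difficulty is that the right hand side involves $\|f\|_\infty$ and not $L(f)$, and $L$ vanishes on constants. To handle this, we write $f = (f - h(f)) + h(f)$. Both derivatives kill constants, and by Proposition \ref{p:findiam}
\[
\|f - h(f)\|_\infty \le \tfrac{\pi}{\sqrt{3}}\, L_{q_0}(f).
\]
Put $\eta_q := \|(\dif_q)_M - (\dif_{q_0})_M\|$, which tends to $0$ by Lemma \ref{cont:lemma:cont}. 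Then
\[
L_q(f) \le \big(1 + \tfrac{\pi}{\sqrt{3}}\,\eta_q\big) L_{q_0}(f).
\]
Symmetrically, using $\|f - h(f)\|_\infty \le \tfrac{\pi}{\sqrt 3} L_q(f)$, we get $L_{q_0}(f) \le \big(1 + \tfrac{\pi}{\sqrt 3}\,\eta_q\big) L_q(f)$. So we may take
\[
C_\Phi = C_\Psi = 1 + \tfrac{\pi}{\sqrt3}\,\eta_q .
\]

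Condition (1) of Theorem \ref{cont:thm:estimate} then holds, and plugging in gives
\[
\compdist \le \tfrac{2\pi}{\sqrt3}\,\big|1 - 1/C_\Phi\big| + \epsilon/C_\Phi
\]
for arbitrary $\epsilon > 0$. This tends to $0$ as $q \to q_0$ (choose $\epsilon$ small first). The only subtle points are the use of $h(f)$ to absorb the constant part in the comparison, and checking that $(A_M, L_q)$ are compact quantum metric spaces, which holds because they are operator subsystems of $(\Lip_q(S^1), L_q)$ by Theorem \ref{cqms:thm:cqms}.
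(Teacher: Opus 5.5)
Your proposal is correct and follows the paper's proof essentially verbatim. You use identity maps in Theorem~\ref{cont:thm:estimate}, obtain the comparison constant $1 + \frac{\pi}{\sqrt{3}}\,\|(\dif_q)_M - (\dif_{q_0})_M\|$ by subtracting $h(f)\cdot 1$ and invoking Proposition~\ref{p:findiam}, use the uniform diameter bound $\frac{2\pi}{\sqrt{3}}$, and conclude with Lemma~\ref{cont:lemma:cont}. Your handling of the requirement $\epsilon_\X,\epsilon_\Y > 0$, by letting $\epsilon \to 0$, is in fact slightly more careful than the paper, which simply drops the $\epsilon$-terms.
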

\begin{proof}
  Let \(M\in\N_0\) and \(q_0\in(0,1]\) be given. Define the function $\chi : (0,1] \to [0,\infty)$ by putting $\chi(q) := \big\| (\dif_q)_M - (\dif_{q_0})_M \big\|$ for all $q \in (0,1]$. 

  Let $q \in (0,1]$. For every \(f\in A_M\) we get from Proposition \ref{p:findiam} that
    \[
  \begin{split}
    L_q(f)& \le \big\|(\dif_q)_M(f)-(\dif_{q_0})_M(f)\big\|_\infty + L_{q_0}(f)\\
          & =\big\|\big((\dif_q)_M-(\dif_{q_0})_M \big)\big(f-h(f) \cd 1\big)\big\|_\infty + L_{q_0}(f)\\
    & \le \chi(q) \cd \big\| f-h(f) \cd 1 \big\|_\infty + L_{q_0}(f)
    \le \big( \chi(q) \cd \pi/\sqrt{3} + 1\big)\cdot L_{q_0}(f) .
  \end{split}
  \]
  Likewise, we obtain that $L_{q_0}(f)\le \big( \chi(q) \cd \pi/\sqrt{3} + 1\big) \cd L_{q}(f)$ for all $f\in A_M$. 

      Since we know from Proposition \ref{cqms:prop:fdfoxonly} that \(\diam( A_M,L_q)\le \frac{2\pi}{\sqrt 3}\), an application of Theorem \ref{cont:thm:estimate} yields that
  \[
    \compdist\big((A_M,L_q),(A_M,L_{q_0})\big)\le \frac{2\pi}{\sqrt 3} \cd \Big| 1-\frac{1}{\chi(q) \cdot \pi/\sqrt{3}+1} \Big| .
  \]
The result of the present proposition now follows immediately from Lemma \ref{cont:lemma:cont}.
\end{proof}

Combining Proposition \ref{cont:prop:uniform} with Proposition \ref{cont:prop:specband}, we obtain the second and last main result of this paper:

\begin{theorem}
  For every \(q_0\in(0,1]\), it holds that
  \[
    \lim_{q\to q_0}\compdist\big((\Lip_q(S^1),L_q),(\Lip_{q_0}(S^1),L_{q_0})\big)=0.
  \]
\end{theorem}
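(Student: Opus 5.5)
The plan is a triangle-inequality argument for Kerr's complete Gromov-Hausdorff distance, approximating both compact quantum metric spaces by the same finite dimensional spectral band. Fix $q_0 \in (0,1]$ and $\ep > 0$. For every $q \in (0,1]$ and $M \in \B N_0$ we have
\[
\begin{split}
\compdist\big((\Lip_q(S^1),L_q),(\Lip_{q_0}(S^1),L_{q_0})\big)
\leq {} & \compdist\big((\Lip_q(S^1),L_q),(A_M,L_q)\big) \\
& + \compdist\big((A_M,L_q),(A_M,L_{q_0})\big) \\
& + \compdist\big((A_M,L_{q_0}),(\Lip_{q_0}(S^1),L_{q_0})\big) .
\end{split}
\]
This uses the fact that $\compdist$ satisfies the triangle inequality, which is part of Kerr's theory in \cite{Ker:MQG}.

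The first and third terms are both bounded by $\ep_M$, by Proposition \ref{cont:prop:uniform}. The key point is that this bound does not depend on the deformation parameter, because the estimate of Proposition \ref{cqms:prop:eapprox} holds uniformly for all $q \in (0,1]$. Since $\lim_{M \to \infty} \ep_M = 0$, we choose $M$ with $\ep_M < \ep/3$. This choice depends only on $\ep$ and not on $q$.

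With $M$ now fixed, Proposition \ref{cont:prop:specband} gives a $\de > 0$ such that the middle term is less than $\ep/3$ whenever $q \in (0,1]$ and $|q - q_0| < \de$. For all such $q$ the total is then less than $\ep$, which proves the limit.

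The main obstacle is the uniformity in $q$ of the spectral band approximation, which is what allows $M$ to be fixed before letting $q \to q_0$. This uniformity is already supplied by Proposition \ref{cqms:prop:eapprox} through the $q$-independent constant $\ep_M$, which in turn rests on $|[n]_q| \geq |n|$. The only other point to confirm is the triangle inequality for $\compdist$, which holds by Kerr's construction.
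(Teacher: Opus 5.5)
Your proposal is correct and follows the paper's proof essentially verbatim: you fix $M$ via the $q$-uniform bound $\epsilon_M$ from Proposition \ref{cont:prop:uniform}, then use Proposition \ref{cont:prop:specband} to get $\delta$, and conclude with Kerr's triangle inequality for $\compdist$. You also correctly identify the uniformity of $\epsilon_M$ in $q$ as the point that lets $M$ be chosen before letting $q \to q_0$.
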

\begin{proof}
  Let \(q_0\in(0,1]\) and \(\epsilon>0\) be given. By Proposition \ref{cont:prop:uniform} we may choose an \(M\in\N_0\) such that
$\compdist\big((A_M,L_q),(\Lip_q(S^1),L_q) \big) < \ep/3$ for all $q \in (0,1]$. Next, we may apply Proposition \ref{cont:prop:specband} to choose a \(\delta>0\) such that $\compdist\big((A_M,L_q),(A_M,L_{q_0}) \big) <\epsilon/3$ for all \(q\in (0,1]\) with \(|q-q_0|<\delta\). The triangle inequality for the complete Gromov-Hausdorff distance, \cite[Proposition 3.4]{Ker:MQG}, then entails that $\compdist\big((\Lip_q(S^1),L_q),(\Lip_{q_0}(S^1),L_{q_0})\big) < \ep$ for all \(q\in (0,1]\) with \(|q-q_0|<\delta\). This ends the proof of the theorem.
\end{proof}

\bibliographystyle{amsalpha-lmp}

\providecommand{\bysame}{\leavevmode\hbox to3em{\hrulefill}\thinspace}
\providecommand{\MR}{\relax\ifhmode\unskip\space\fi MR }
\providecommand{\MRhref}[2]{%
  \href{http://www.ams.org/mathscinet-getitem?mr=#1}{#2}
}
\providecommand{\href}[2]{#2}

\end{document}